\documentclass{article}

\usepackage{amsmath,amssymb,amsthm}
\usepackage{hyperref}

\newtheorem{theorem}{Theorem}
\newtheorem{lemma}{Lemma}
\newtheorem{corollary}{Corollary}

\usepackage{graphicx} 
\usepackage[utf8]{inputenc}
\usepackage{makecell}
\usepackage{algorithm,algorithmic} 
\usepackage{multirow}
\usepackage{amsmath,amssymb}
\usepackage{mathtools}
\usepackage{array}
\usepackage{booktabs}
\usepackage{enumitem}
\usepackage[caption=false]{subfig} 
\usepackage{upgreek}
\usepackage{bm}
\usepackage{relsize}
\usepackage[capitalize]{cleveref}
\usepackage{xcolor}
\usepackage[normalem]{ulem}
\usepackage{caption}

\usepackage{listings}
\definecolor{codegreen}{rgb}{0,0.6,0}
\definecolor{codegray}{rgb}{0.5,0.5,0.5}
\definecolor{codepurple}{rgb}{0.58,0,0.82}
\definecolor{backcolour}{rgb}{0.95,0.95,0.92}

\crefname{lstlisting}{listing}{listings}
\Crefname{lstlisting}{Listing}{Listings}

\lstdefinestyle{mystyle}{
    backgroundcolor=\color{backcolour},   
    commentstyle=\color{codegreen},
    keywordstyle=\color{magenta},
    numberstyle=\tiny\color{codegray},
    stringstyle=\color{codepurple},
    basicstyle=\ttfamily\footnotesize,
    breakatwhitespace=false,         
    breaklines=true,                 
    captionpos=b,                    
    keepspaces=true,                 
    numbers=left,                    
    numbersep=5pt,                  
    showspaces=false,                
    showstringspaces=false,
    showtabs=false,                  
    tabsize=2
}

\newcommand{\tr}{\operatorname{tr}}
\newcommand{\ts}{^T}
\newcommand{\Rtheta}{R_{\theta}}

\DeclareMathOperator{\Diag}{diag}

\DeclareMathOperator{\est}{est}

\DeclareMathOperator*{\argmin}{arg\,\min}

\newcommand{\KW}{Karlson-Wald\'{e}n~}

\newtheorem{assumption}{Condition}
\newtheorem{example}{Example}
\newtheorem{remark}{Remark}

\title{A Variational Equation and Lower Bound for the Linear Least-Squares Backward Error}
\author{Eric Hallman\thanks{Contact: \href{eric.r.hallman@gmail.com}{\texttt{eric.r.hallman@gmail.com}}. Palo Alto, USA. ORCID: 0000-0001-7908-2296}}
\date{\today}

\begin{document}

\maketitle
    \begin{changemargin}{1cm}{1cm}
    \begingroup
    \footnotesize
    {\bf Abstract}: This paper derives a new variational equation for the linear least-squares backward error by expressing the backward error in terms of a generalized eigenvalue problem and using results from indefinite linear algebra. For problems with multiple right-hand sides, the variational equation also shows that the backward error can be decomposed as a sum of smaller backward error problems. Applications to stopping criteria for iterative methods are considered, and a new sketching-based lower bound is proposed which is provably of quality comparable to the sketched Karlson-Wald\'{e}n estimate.
    \endgroup
    \end{changemargin}


\begingroup
\footnotesize
{\bf Keywords}: Linear least squares, backward error, generalized eigenvalue problem, stopping criteria, iterative methods, LSQR, LSMR, Karlson-Wald\'{e}n estimate
\endgroup

\begingroup
\footnotesize
{\bf MSC Classification}: 15A06, 15A18, 15A22, 15A42, 65F10
\endgroup

\section{Introduction}
Given an approximate solution $X$ to the linear least-squares problem 
\begin{equation*}
    \min_{X\in \mathbb{R}^{n\times d}} \|AX-B\|_F,
\end{equation*}
where $A\in \mathbb{R}^{m\times n}$ and $B\in \mathbb{R}^{m\times d}$, this paper considers the problem of efficiently estimating the {\em backward error} 
\begin{equation}\label{def:backward_error}
    \mu \equiv \min_{E,F}\left\{\|[E, \theta F]\|_F\,:\, (A+E)\ts (A+E)X = (A+E)\ts (B+F)\right\}.
\end{equation}
Here, $E$ and $F$ are perturbations such that $X$ is an exact solution to the perturbed least-squares problem, and $\theta \in (0,\infty]$ is a weighting parameter. In principle, the backward error can be used in stopping criteria for iterative least-squares solvers, where the solver halts once $\mu$ falls below a user-defined tolerance. In practice, computing $\mu$ directly requires solving a large eigenvalue problem and so is prohibitively expensive in this context. It is therefore desirable to find estimates that are both cheap and reliable; see \cite{karlson1997estimation, jiranek2010estimating, gratton2013simple, hallman2020estimating,epperly2026fast} for some prior work on this topic.

A key term appearing in expressions for the backward error is 
\begin{equation*}
    R_\theta \equiv R\begin{bmatrix}
        X \\ -\theta^{-1}I
    \end{bmatrix}^\dagger,\quad \text{where}\quad R\equiv B-AX,
\end{equation*}
where $A^\dagger$ denotes the Moore-Penrose inverse of $A$. It represents the optimal backward error for the {\em consistent} problem, 
\begin{equation*}
    R_\theta = \argmin_{[E,\theta F]} \left\{\|[E, \theta F]\|_F\,:\, (A+E)X = B+F\right\},
\end{equation*}
provided a valid perturbation exists. This requires the following condition:
\begin{assumption}\label{assumption:rank}
    The approximate solution $X$ satisfies
    $\mathcal{R}(R\ts)\subseteq \mathcal{R}([X\ts , \theta^{-1} I])$.
\end{assumption}
It is sufficient to have $\theta < \infty$, or for $X$ to have full column rank. Under this condition, the backward error may be expressed as 
\begin{equation}\label{def:mu_min_proj}
    \mu = \mu(A,R_\theta) \equiv \min_{Y} \left(\|YY^\dagger A\|_F^2 + \|(I-YY^\dagger)R_\theta\|_F^2\right)^{\frac{1}{2}}. 
\end{equation}
If \cref{assumption:rank} is not satisfied, the analysis is slightly more complicated \cite[Thm.~5.1]{sun1996optimal} but still requires solving a problem of the same type as \cref{def:mu_min_proj}. We find it useful to study the binary function $\mu(A,R_\theta)$ as opposed to the definition \cref{def:backward_error} from which it was derived, and so will use the term ``backward error'' to refer to $\mu(A,R_\theta)$ for the remainder of the paper.

\begin{remark}\label{remark:mu_rotation_invariant}
    It can be seen from \cref{def:mu_min_proj} that $\mu(A,R_\theta)$ is invariant under right rotations of its inputs. We can therefore assume without loss of generality that $A$ and $R_\theta$ both have full column rank. Accordingly, sometimes $R(\theta^{-2}I+X\ts X)^{-1/2}$ is used in place of $R_\theta$ in the literature, or $\theta r/\sqrt{1 + \theta^2\|x\|_2^2}$ for problems with a single right-hand side. 
\end{remark}


The main contribution of this work is the following theorem, which elegantly decomposes the backward error as a sum of smaller backward error problems:

\begin{theorem}\label{thm:main_decomposition}
    Let $A \in \mathbb{R}^{m\times n}$ and $\Rtheta\in \mathbb{R}^{m\times d}$. Then with $\mu(A,\Rtheta)$ defined as in \cref{def:mu_min_proj} and $k\equiv \min(n, d)$,
    \begin{equation}\label{eqn:main_decomposition}
        \mu(A,R_\theta) = \max_{\substack{P\ts P = I_{k}\\ Q\ts Q = I_{k}}}\left(\sum_{i=1}^{k}\mu^2(Ap_i, R_\theta q_i)\right)^{\frac{1}{2}},
    \end{equation}
    where $P = [p_1,\ldots, p_{k}]\in \mathbb{R}^{n\times k}$ and $Q=[q_1,\ldots,q_k]\in \mathbb{R}^{d\times k}$. 
\end{theorem}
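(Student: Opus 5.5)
The proof has two directions. The inequality ``$\ge$'' in \cref{eqn:main_decomposition} follows directly from the variational form \cref{def:mu_min_proj}. The inequality ``$\le$'' needs a specific choice of $P,Q$ that makes the problem decouple, and I would build that choice from the spectral structure of $AA\ts-\Rtheta\Rtheta\ts$.

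\emph{Step 1 (reformulation).} Write $\Pi = YY^\dagger$, which ranges over all orthogonal projectors. Then
\begin{equation*}
\mu^2(A,\Rtheta) = \|\Rtheta\|_F^2 + \min_{\Pi}\tr\bigl(\Pi(AA\ts-\Rtheta\Rtheta\ts)\bigr).
\end{equation*}
By Ky Fan's principle, $\mu^2$ equals $\|\Rtheta\|_F^2$ plus the sum of the negative eigenvalues of $M\equiv AA\ts-\Rtheta\Rtheta\ts$. A minimizer is the projector $\Pi_\ast$ onto the negative eigenspace of $M$, and it commutes with $M$. The nonzero eigenvalues of $M$ coincide with those of the pencil $\bigl([A,\Rtheta]\ts[A,\Rtheta],\,J\bigr)$ with $J=\Diag(I_n,-I_d)$. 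This is the generalized/indefinite eigenproblem alluded to in the abstract. The same formula applied to a pair of vectors $(a,r)$ gives $\mu^2(a,r)$ as $\|r\|^2$ plus the negative eigenvalue of a rank-two matrix, if that eigenvalue exists.

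\emph{Step 2 (easy direction).} Fix orthonormal $P,Q$ and let $\Pi_\ast$ be optimal for $(A,\Rtheta)$. For each $i$, $\mu^2(Ap_i,\Rtheta q_i)\le \|\Pi_\ast Ap_i\|^2+\|(I-\Pi_\ast)\Rtheta q_i\|^2$. Summing over $i$ gives at most $\|\Pi_\ast AP\|_F^2+\|(I-\Pi_\ast)\Rtheta Q\|_F^2$. Since $PP\ts\preceq I$ and $QQ\ts\preceq I$, this is at most $\|\Pi_\ast A\|_F^2+\|(I-\Pi_\ast)\Rtheta\|_F^2=\mu^2(A,\Rtheta)$. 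Hence the right-hand side of \cref{eqn:main_decomposition} is at most $\mu$.

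\emph{Step 3 (attaining equality; main obstacle).} I need $P,Q$ such that $AP$ and $\Rtheta Q$ together carry all of $\|\Pi_\ast A\|_F^2+\|(I-\Pi_\ast)\Rtheta\|_F^2$. I also need the pairs $(Ap_i,\Rtheta q_i)$ to live in mutually orthogonal, $\Pi_\ast$-invariant subspaces $S_i$, so that $\Pi_\ast|_{S_i}$ is optimal for the $i$-th pair. Then the minima add up. The stationarity relation $\Pi_\ast M(I-\Pi_\ast)=0$, i.e.
\begin{equation*}
\Pi_\ast AA\ts(I-\Pi_\ast)=\Pi_\ast\Rtheta\Rtheta\ts(I-\Pi_\ast),
\end{equation*}
suggests a simultaneous SVD. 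I would first try taking $P$ and $Q$ from the SVDs of $A\ts\Pi_\ast\Rtheta$ and $A\ts(I-\Pi_\ast)\Rtheta$. Equivalently, I would take the $J$-orthogonal eigenvectors $[p;q]$ of the pencil in Step 1 and show that their $n$- and $d$-blocks can be orthonormalized separately without destroying the block-diagonal structure. The aim is a hyperbolic/CS-type canonical form in which $(AP)\ts AP$, $(\Rtheta Q)\ts\Rtheta Q$ and $(AP)\ts\Rtheta Q$ are all diagonal after the $\Pi_\ast$ splitting.

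The delicate points are, first, padding when $n\neq d$: here $k=\min(n,d)$, and the surplus columns of the larger factor must contribute nothing to $\mu^2$, which I would check using the fact that $\Pi_\ast A$ or $(I-\Pi_\ast)\Rtheta$ has rank at most $k$. Second, there are degenerate eigenvalues, which I would handle by a continuity argument. Once the canonical form is in place, each term $\mu^2(Ap_i,\Rtheta q_i)$ is a $2\times2$ computation. Summing these terms reproduces $\|\Rtheta\|_F^2$ plus the sum of the negative eigenvalues of $M$, which closes the argument.
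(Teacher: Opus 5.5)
\emph{Verdict: Step 2 is correct, but Step 3 is a genuine gap.}

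Your Step 2 is correct. It is a more elementary route than the paper's to ``the right-hand side of \cref{eqn:main_decomposition} is at most $\mu$''; the paper instead obtains this from the converse half of \cref{lemma:hyperbolic_CS} inserted into \cref{lemma:variational_first}. The gap is Step 3, which carries the whole content of the theorem. It states a goal but constructs no $P,Q$, and the structure it aims for does not exist in general.

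To see this, take the generic full-rank case with $n\ge d$. Let $X_*=[X_{*,1};X_{*,2}]$ hold the generalized eigenvectors for $\lambda_1^-,\dots,\lambda_d^-$, normalized so that $X_*^TJ_{n,d}X_*=-I_d$. Set $U_-=[A,R_\theta]X_*$ and $\Lambda_-=\Diag(\lambda_i^-)$. Then $A^TU_-=X_{*,1}\Lambda_-$, $R_\theta^TU_-=-X_{*,2}\Lambda_-$, and $\Pi_*=-U_-\Lambda_-^{-1}U_-^T$. Let $X_*=\Diag(P,Q)[S;C]Z^T$ be the hyperbolic CS decomposition.

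With these $P,Q$, the vectors $w_i=s_iAp_i+c_iR_\theta q_i\in S_i$ are the columns of $U_-Z$. Their Gram matrix is $-Z^T\Lambda_-Z$. Now $Z$ diagonalizes $X_{*,1}^TX_{*,1}$, whose entries are $u_i^TAA^Tu_j/(\lambda_i^-\lambda_j^-)$, so $-Z^T\Lambda_-Z$ is generically not diagonal. Generically every maximizer has this form: the optimal $X$ in \cref{eqn:variational_first} is unique up to a right orthogonal factor $W$, and $P^TP=I$ forces $W$ to diagonalize $X_{*,1}^TX_{*,1}$. So the $S_i$ are not mutually orthogonal at any maximizer, and the Gram matrices you want diagonal are not diagonal. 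Your first candidate also fails: $A^T\Pi_*R_\theta=X_{*,1}\Lambda_-X_{*,2}^T=PS(Z^T\Lambda_-Z)CQ^T$, and its singular vectors are not $P,Q$.

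Orthogonality between different pairs is not actually needed. Equality in your own Step 2 chain requires only two things:
(a) $\Pi_*$ is optimal for each pair $(Ap_i,R_\theta q_i)$ separately;
(b) $\Pi_*A(I-PP^T)=0$, with $Q$ square.

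Both hold for the CS factors above. One computes $\Pi_*AP=-U_-ZS$ and $\Pi_*R_\theta Q=U_-ZC$, so $\Pi_*$ maps $S_i$ into $\operatorname{span}(w_i)$. Moreover $(Ap_ip_i^TA^T-R_\theta q_iq_i^TR_\theta^T)w_i=(Z^T\Lambda_-Z)_{ii}\,w_i$ with $(Z^T\Lambda_-Z)_{ii}=-\|w_i\|_2^2<0$. Hence $w_i$ spans the negative eigenspace of the $i$-th pair, which gives (a). Condition (b) follows from $\mathcal{R}(A^T\Pi_*)=\mathcal{R}(X_{*,1})\subseteq\mathcal{R}(P)$.

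The paper avoids even this computation. \Cref{lemma:variational_first} with $n=d=1$ gives the lower bound $\mu^2(a,r)\ge\|r\|_2^2-\|sa+cr\|_2^2$ for every $c^2-s^2=1$. Summing it over the columns of the CS factors of $X_*$ gives $\mu^2(A,R_\theta)=\|R_\theta Q\|_F^2-\|[AP,R_\theta Q][S;C]\|_F^2\le\sum_i\mu^2(Ap_i,R_\theta q_i)$ directly.

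The missing idea is therefore some lower-bound mechanism for the individual terms $\mu^2(Ap_i,R_\theta q_i)$, applied to $P,Q$ taken from the hyperbolic CS decomposition of the negative generalized eigenvectors. Without it, Step 3 only restates what must be proved.
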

Each summand can be evaluated cheaply and stably; \cref{thm:rank_two_eigenvalue_problem} gives an exact expression.

We also consider some potential applications of \cref{thm:main_decomposition} to stopping criteria for iterative methods. The main point of comparison in this regard is the {\em sketched \KW estimate} \cite{epperly2026fast}, which uses subspace embedding techniques to estimate the highly accurate \KW estimate \cite{karlson1997estimation, gratton2012accuracy}. The room for improvement is admittedly limited, as the sketched \KW estimate was found in \cite{epperly2026fast} to be highly accurate in practice. Still, there is one notable difference: any estimate derived from \cref{eqn:main_decomposition} is a lower bound on the backward error, while the sketched \KW estimate is not necessarily an upper or lower bound.

The rest of the paper is organized as follows: \cref{sec:background} provides background including notations and some lemmas. \Cref{sec:indefinite_optimization} proves an intermediate result that involves expressing the backward error in terms of a generalized eigenvalue problem. \Cref{sec:main_results} gives the proof of the main result, along with some results for problems with a single right-hand side. \Cref{sec:application_error_bounds} considers applications to error bounds and stopping criteria. \Cref{sec:numerical_experiments} includes the results of some numerical experiments, and \Cref{sec:conclusion} offers our concluding remarks.

\section{Background}\label{sec:background}

It was shown in \cite[Thm.~4.2]{sun1996optimal} that the backward error can be expressed as
\begin{equation}\label{def:mu_binary}
    \mu(A, R_\theta) = \left(\|R_\theta\|_F^2 + \tr_{-}(AA\ts  - R_\theta R_\theta\ts)\right)^{\frac{1}{2}},
\end{equation}
where $\tr_{-}(X)$ denotes the sum of the negative eigenvalues of $X$. This value can be attained in \cref{def:mu_min_proj} by having $YY^\dagger$ project onto the negative eigenspace of $AA\ts - R_\theta R_\theta\ts$. This matrix has rank at most $n+d$. If $m \gg n \gg d$ then the cost of computing $\mu(A,R_\theta)$ can be reduced using a QR factorization or SVD of $A$ \cite{walden1995optimal,karlson1997estimation}, but this is generally not practical in the context of iterative solvers.

Three simple upper bounds on $\mu(A,R_\theta)$ are obtained by evaluating \cref{def:mu_min_proj} for $Y=0$, $Y=A$, and $Y=R_\theta$. The first choice yields $\mu \leq \|R_\theta\|_F$, but this is not an especially useful upper bound since the residual does not necessarily converge to zero. The second is too costly to compute to be used for stopping rules, although efforts to estimate it are made in \cite{jiranek2010estimating}. The third equals $\|A\ts r\|_2/\|r\|_2$ for problems with a single right-hand side, and it is used in stopping rules for iterative solvers LSQR \cite{paige1982lsqr} and LSMR \cite{fong2011lsmr}. However, it is possible for these three bounds to simultaneously overestimate $\mu$ by a factor depending on the condition number of $A$ \cite{gratton2013simple}. 

The proofs in this work are often simplified by assuming that $[A,R_\theta]$ has full column rank, then invoking a continuity argument. This argument can be made precise via the following lemma:
\begin{lemma}\label{lemma:continuity_argument}
    The backward error as defined in \cref{def:mu_min_proj} satisfies
    \[
    \lim_{\epsilon \rightarrow 0}\mu\left(\begin{bmatrix}
        A \\ \epsilon I \\ 0
    \end{bmatrix}, \begin{bmatrix}
        R_\theta \\ 0 \\ \epsilon I
    \end{bmatrix}\right) = \mu(A, R_\theta). 
    \]
\end{lemma}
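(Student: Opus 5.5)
Write $A_\epsilon$ and $R_\epsilon$ for the augmented matrices in the statement. The plan is to sandwich $\mu(A_\epsilon,R_\epsilon)$ between two quantities, each converging to $\mu(A,R_\theta)$ as $\epsilon\to0$. We work directly with the variational form \cref{def:mu_min_proj}, writing
\[
f(Y;A,R) \equiv \|YY^\dagger A\|_F^2 + \|(I-YY^\dagger)R\|_F^2 ,
\]
so that $\mu^2(A,R)=\min_Y f(Y;A,R)$. The minimization ranges over orthogonal projectors $\Pi=YY^\dagger$. In this notation
\[
f(\Pi;A,R) = \tr\bigl(\Pi(AA\ts - RR\ts)\bigr) + \|R\|_F^2 ,
\]
which is linear in $\Pi$.

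\textbf{Upper bound.} Let $\Pi_0$ be an optimal projector in $\mathbb{R}^m$ for $(A,R_\theta)$. We embed it in the larger space as $\widehat\Pi=\Diag(\Pi_0,0,0)$. Then
\[
\widehat\Pi A_\epsilon=\begin{bmatrix}\Pi_0A\\0\\0\end{bmatrix},
\qquad
(I-\widehat\Pi)R_\epsilon=\begin{bmatrix}(I-\Pi_0)R_\theta\\0\\ \epsilon I\end{bmatrix}.
\]
Hence
\[
\mu^2(A_\epsilon,R_\epsilon)\le \mu^2(A,R_\theta)+\epsilon^2 d .
\]

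\textbf{Lower bound.} Take an arbitrary projector $\Pi$ in the augmented space and write its blocks as $\Pi_{ij}$, $i,j\in\{1,2,3\}$. The diagonal blocks $\Pi_{ii}$ are contractions, so $0\preceq\Pi_{ii}\preceq I$, and all blocks have norm at most $1$. Expanding the linear expression for $f$ with $A_\epsilon A_\epsilon\ts-R_\epsilon R_\epsilon\ts$ gives three groups of terms.
\begin{itemize}
\item The block $(1,1)$ of $A_\epsilon A_\epsilon\ts-R_\epsilon R_\epsilon\ts$ is $AA\ts-R_\theta R_\theta\ts$.
\item The block $(2,2)$ is $\epsilon^2 I_n$ and the block $(3,3)$ is $-\epsilon^2 I_d$.
\item The off-diagonal blocks are $\epsilon A$, $-\epsilon R_\theta$ and their transposes.
\end{itemize}
Therefore
\[
f(\Pi;A_\epsilon,R_\epsilon)\ \ge\ \tr\bigl(\Pi_{11}(AA\ts-R_\theta R_\theta\ts)\bigr)+\|R_\theta\|_F^2 \;-\; C\epsilon\bigl(\|A\|_F+\|R_\theta\|_F\bigr)\sqrt{\min(m,n+d)}\;-\;\epsilon^2 d ,
\]
where $C$ is an absolute constant. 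The $\sqrt{\cdot}$ factor bounds traces of products with contractions by the trace norm.

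It remains to show that
\[
\tr\bigl(\Pi_{11}(AA\ts-R_\theta R_\theta\ts)\bigr)+\|R_\theta\|_F^2\ \ge\ \mu^2(A,R_\theta).
\]
This holds because $\Pi_{11}$ is a contraction with $0\preceq\Pi_{11}\preceq I$. The linear functional $M\mapsto\tr(MH)$ over $\{0\preceq M\preceq I\}$ attains its minimum at the projector onto the negative eigenspace of $H$, with value $\tr_-(H)$. Applying this with $H=AA\ts-R_\theta R_\theta\ts$ and using \cref{def:mu_binary} gives exactly $\mu^2(A,R_\theta)$.

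Combining the two bounds gives
\[
\bigl|\mu^2(A_\epsilon,R_\epsilon)-\mu^2(A,R_\theta)\bigr|=O(\epsilon),
\]
and the limit follows by taking square roots.

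The main obstacle is this lower bound: the compression argument must be set up carefully. The key point is that minimizing over all contractions $0\preceq M\preceq I$, rather than only over projectors, does not decrease the minimum. This follows from the eigen-decomposition of $H$, or equivalently from Ky Fan's principle. Everything else is routine norm bookkeeping.
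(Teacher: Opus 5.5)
Your argument is correct, but it takes a different route from the paper. The paper's proof is a one-line appeal to \cref{def:mu_binary}. One has $\mu^2(A_\epsilon,R_\epsilon)=\|R_\theta\|_F^2+\epsilon^2 d+\tr_{-}(H_\epsilon)$, where $H_\epsilon=A_\epsilon A_\epsilon\ts-R_\epsilon R_\epsilon\ts$ converges to $\Diag(AA\ts-R_\theta R_\theta\ts,0,0)$. That limit has exactly the negative eigenvalues of $AA\ts-R_\theta R_\theta\ts$. Eigenvalues of symmetric matrices depend continuously on the entries, and $\tr_{-}$ is the sum of $\min(\lambda_i,0)$, so the limit follows.

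You instead work from the variational form \cref{def:mu_min_proj}. For the upper bound, an embedded optimal projector gives $\mu^2(A_\epsilon,R_\epsilon)\le\mu^2(A,R_\theta)+\epsilon^2 d$. For the lower bound, you compress an arbitrary projector to its $(1,1)$ block and use that minimizing $M\mapsto\tr(MH)$ over $0\preceq M\preceq I$ still gives $\tr_{-}(H)$. The individual steps check out:
\begin{itemize}
\item the block structure of $H_\epsilon$ is right;
\item the cross-term bound $|\tr(\Pi_{21}A)|\le\|\Pi_{21}\|_2\|A\|_*$ holds;
\item the contraction relaxation is valid;
\item your $-\epsilon^2 d$ term is even slightly generous, since $\|R_\epsilon\|_F^2$ contributes an offsetting $+\epsilon^2 d$.
\end{itemize}

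Your route gives an explicit rate, $|\mu^2(A_\epsilon,R_\epsilon)-\mu^2(A,R_\theta)|=O(\epsilon)$ with concrete constants. Its contraction step also essentially re-derives \cref{def:mu_binary} rather than relying on eigenvalue perturbation theory. The paper's route is much shorter: the block bookkeeping is absorbed into the standard continuity of eigenvalues. Weyl's inequality would give a comparable quantitative rate there if one wanted it.
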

\begin{proof}
    Use \cref{def:mu_binary} and the fact that eigenvalues are continuous.
\end{proof}

\subsection{The Karlson-Wald\'{e}n estimate}
The \KW estimate was derived in \cite[(2.6)]{karlson1997estimation} as the solution to a maximization problem
\begin{align*}
    \nu(A, r_\theta) &\equiv \max_{\|p\|_2=1} \frac{|p\ts A\ts r_\theta|}{(\|Ap\|_2^2 + \|r_\theta\|_2^2)^{1/2}} \\
    &= \|(A\ts A+\|r_\theta\|_2^2I)^{-1/2}A\ts r_\theta\|_2, \nonumber
\end{align*}
where the maximum is attained by $p\propto (A\ts A + \|r_\theta\|_2^2I)^{-1}A\ts r_\theta$. For multiple right-hand sides, the estimate can be defined additively: if $W=[w_1,\ldots,w_d]$ is a set of right singular vectors for $R_\theta$, then
\begin{equation}\label{def:kw_multiple_rhs}
    \nu(A, R_\theta) \equiv \left(\sum_{i=1}^d \nu^2(A, R_\theta w_i)\right)^{\frac{1}{2}}.
\end{equation}
It can alternately be defined in terms of a minimal perturbation \cite[(14)]{hallman2020estimating}
\begin{equation*}
    \nu(A, R_\theta) = \min_{E,F} \{\|[E,F]\|_F\ :\ A\ts F + E\ts R_\theta = -A\ts R_\theta\},
\end{equation*}
which shows that $\nu(A,R_\theta)$, like $\mu(A,R_\theta)$, is symmetric in its inputs and invariant under right rotations of the inputs.

The \KW estimate is always accurate, satisfying \cite[Thm.~4.8]{hallman2020estimating}
\begin{equation*}
    1 \leq \frac{\mu(A,R_\theta)}{\nu(A,R_\theta)} \leq \sqrt{1 + \|AA^\dagger R_\theta R_\theta ^\dagger\|_2} \leq \sqrt{2}. 
\end{equation*}
The $\sqrt{2}$ bound cannot be improved in general \cite{gratton2012accuracy}. For the single right-hand side case the asymptotic accuracy of $\nu(A,r_\theta)$ was proved in \cite[Thm.~4.8]{grcar2003optimal}, and \cite[Cor.~3.2]{gratton2012accuracy} gives an asymptotic bound that is tighter than the one stated above. 

\subsubsection{The sketched Karlson-Wald\'{e}n estimate}

The \KW estimate is still too expensive to use directly for stopping rules, though some efforts to efficiently compute or estimate it have been made in \cite{jiranek2010estimating, hallman2018lsmb, fong2011minimum}. One practical approach, recently proposed in \cite{epperly2026fast}, is to use a {\em sketching matrix}: a matrix $S\in \mathbb{R}^{n_{\text{sketch}}\times m}$ with the property that $\|SAy\|_2\approx \|Ay\|_2$ for all $y\in \mathbb{R}^n$ with high probability. One option for $S$ is a Gaussian embedding, whose entries are independent $\mathcal{N}(0,n_{\text{sketch}}^{-1})$ random variables. Other options are discussed in \cite{Martinsson_Tropp_2020}; the authors in \cite{epperly2026fast} recommend using a class known as sparse sign embeddings.

Given a sketching matrix $S\in \mathbb{R}^{n_{\text{sketch}}\times m}$ ($n_{\text{sketch}} \geq n$), the {\em sketched \KW estimate} is defined as 
\begin{equation}\label{def:sketched_kw_estimate}
    \widetilde{\nu}(A,r_\theta; S) \equiv \|((SA)\ts (SA) + \|r_\theta\|_2^2I)^{-1/2}A\ts r_\theta\|_2
\end{equation}
for the single right-hand-side case; the definition can be naturally extended to the multiple right-hand-side case as with \cref{def:kw_multiple_rhs}. This estimate can be practical for iterative methods because the SVD of $SA$ can be precomputed, after which computing $\widetilde{\nu}$ requires only computing $A\ts r_\theta$ plus $\mathcal{O}(n^2)$ operations. It was observed in \cite{epperly2026fast} that the sketched estimate is extremely reliable in practice, although the authors note that it is difficult to determine at runtime the precise level of distortion incurred by the sketch.



\subsection{Tools from indefinite linear algebra}
The key technique in this work is to transform the eigenvalue problem from \cref{def:mu_binary} into a generalized eigenvalue problem. We first introduce a standard result for the generalized eigenvalue problem, essentially \cite[Cor.~8.7.2]{GoVa13} with minor modifications.
\begin{lemma}\label{lemma:gen_evp_basic}
    Let $A\in \mathbb{R}^{n\times n}$ be symmetric positive definite and $B\in \mathbb{R}^{n\times n}$ symmetric and nonsingular. Then there exists a nonsingular $V = [v_1,\ldots,v_n]\in \mathbb{R}^{n\times n}$ such that
    \[V\ts AV = \Diag(a_1,\ldots, a_n)\quad \text{and}\quad V\ts BV = \Diag(b_1,\ldots,b_n).\]
    Moreover, $Av_i = \lambda_i Bv_i$, where $\lambda_i = a_i/b_i$. 
\end{lemma}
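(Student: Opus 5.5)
The plan is to reduce to the standard symmetric-definite simultaneous diagonalization. Since $A$ is symmetric positive definite, it has a Cholesky-type factorization, and I take $A = LL\ts$ with $L$ nonsingular; the symmetric square root $A^{1/2}$ would serve equally well. Set $C \equiv L^{-1} B L^{-\ts}$. This matrix is symmetric, and it is nonsingular because $B$ is. By the spectral theorem there is an orthogonal $U$ with $U\ts C U = \Diag(c_1,\ldots,c_n)$, and every $c_i \neq 0$ since $C$ is nonsingular.

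Next I define $V \equiv L^{-\ts}U$, which is nonsingular as a product of nonsingular matrices. A direct computation gives $V\ts A V = U\ts L^{-1}LL\ts L^{-\ts}U = I$, so $a_i = 1$, and $V\ts B V = U\ts C U = \Diag(c_1,\ldots,c_n)$, so $b_i = c_i \neq 0$. This yields the simultaneous diagonalization. Note that the statement allows any diagonal form, and the normalization $a_i=1$ is one admissible choice. If one wants the statement for an arbitrary congruence pair, the eigen-relation argument below applies to any $V$ that diagonalizes both matrices with $b_i\neq 0$.

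For the eigen-relation, I start from $V\ts A V = D_a$ and $V\ts B V = D_b$. Because $V$ is nonsingular, these give $AV = V^{-\ts} D_a$ and $BV = V^{-\ts} D_b$. Reading off column $i$ gives $Av_i = a_i V^{-\ts}e_i$ and $Bv_i = b_i V^{-\ts}e_i$. Since $b_i \neq 0$ (as $B$ is nonsingular, $D_b = V\ts B V$ is too), it follows that $Av_i = (a_i/b_i) B v_i = \lambda_i Bv_i$.

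There is no real obstacle here. The only point that needs care is checking $b_i\neq 0$, which follows from the nonsingularity of $B$ and $V$, so that the ratios $\lambda_i = a_i/b_i$ are defined. Also, $a_i>0$ by positive definiteness, so each $\lambda_i$ is finite and nonzero and its sign equals the sign of $b_i$. That sign information is presumably what the later indefinite-inertia arguments will use.
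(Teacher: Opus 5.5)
Your proof is correct and follows the standard argument behind Golub and Van Loan's Cor.~8.7.2, which the paper cites instead of giving a proof. That argument congruence-normalizes the definite matrix to the identity, diagonalizes the transformed symmetric matrix with the spectral theorem, and, as the paper's ``minor modification'' with the roles of $A$ and $B$ swapped, uses nonsingularity of $B$ to guarantee $b_i\neq 0$ so that $\lambda_i=a_i/b_i$ is defined.
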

The columns of $V$ are known as {\em generalized eigenvectors of the pencil $(A,B)$}, and the $\lambda_i$ are the {\em generalized eigenvalues}.

Next, define
\begin{equation*}
    J_{n,d} \equiv \begin{bmatrix}
        I_n & 0\\ 0 & -I_d
    \end{bmatrix}.
\end{equation*}
The second lemma does the heavy lifting, and is a special case of a result by Kova\v{c}-Striko and Veseli\'{c} \cite[Thm.~3.1]{kovavc1995trace}.
\begin{lemma}\label{lemma:striko_trace}
    Let $A\in \mathbb{R}^{(n+d)\times (n+d)}$ be symmetric positive semidefinite. Then
    \begin{equation*}
        \min_{X\ts J_{n,d}X = -I_d} \tr (X\ts AX) \geq -\sum_{i=1}^d\lambda_i^{-},
    \end{equation*}
    where 
    \begin{equation*}
       \lambda_n^+\geq \cdots \geq \lambda_1^+ \geq  \lambda_1^- \geq \cdots \geq \lambda_d^-
    \end{equation*} are the generalized eigenvalues of $(A, J_{n,d})$. If there exists a feasible matrix $X_*$ whose columns are generalized eigenvectors corresponding to $\lambda_1^{-},\ldots,\lambda_{d}^-$, then equality is attained at $X_*$. 
\end{lemma}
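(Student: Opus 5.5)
The plan is to first prove the inequality when $A$ is positive definite, using \cref{lemma:gen_evp_basic} to diagonalize the pencil $(A,J_{n,d})$, and then pass to the semidefinite case by continuity. The equality claim will be checked directly from the eigenvector equation.

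\textbf{Step 1 (positive definite case, diagonalization).} Assume $A\succ 0$. Since $J_{n,d}$ is symmetric and nonsingular, \cref{lemma:gen_evp_basic} gives a nonsingular $V$ with $V\ts AV$ and $V\ts J_{n,d}V$ both diagonal. By Sylvester's law of inertia, $V\ts J_{n,d}V$ has exactly $n$ positive and $d$ negative diagonal entries. After rescaling and permuting the columns of $V$ we may assume
\[
V\ts J_{n,d}V=J_{n,d},\qquad V\ts AV=\Diag(\alpha_1,\ldots,\alpha_n,\beta_1,\ldots,\beta_d),
\]
with all $\alpha_i,\beta_j>0$. The generalized eigenvalues are then $\alpha_i>0$ for the positive-type columns and $-\beta_j<0$ for the negative-type columns. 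Hence the negative-type eigenvalues are exactly the $d$ smallest, so $\{\lambda_j^-\}=\{-\beta_j\}$ and $-\sum_j\lambda_j^-=\sum_j\beta_j$.

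\textbf{Step 2 (change of variables and bound).} Write $X=VY$ with $Y=\begin{bmatrix}Y_1\\ Y_2\end{bmatrix}$, where $Y_2\in\mathbb{R}^{d\times d}$. The constraint becomes $Y\ts J_{n,d}Y=-I_d$, that is,
\[
Y_2\ts Y_2=I_d+Y_1\ts Y_1\succeq I_d.
\]
Because $Y_2$ is square, all its singular values are at least $1$, and therefore also $Y_2Y_2\ts\succeq I_d$. Writing $D_\alpha=\Diag(\alpha_1,\ldots,\alpha_n)$ and $D_\beta=\Diag(\beta_1,\ldots,\beta_d)\succeq0$, we get
\[
\tr(X\ts AX)=\tr(Y_1\ts D_\alpha Y_1)+\tr(D_\beta Y_2Y_2\ts)\;\ge\;\tr(D_\beta)=\sum_j\beta_j=-\sum_j\lambda_j^-,
\]
which is the desired inequality.

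\textbf{Step 3 (equality).} Suppose $X_*$ is feasible and its columns satisfy $AX_*=J_{n,d}X_*\Lambda$, where $\Lambda=\Diag(\lambda_1^-,\ldots,\lambda_d^-)$. Then
\[
X_*\ts AX_*=(X_*\ts J_{n,d}X_*)\Lambda=-\Lambda,
\]
so $\tr(X_*\ts AX_*)=-\sum_i\lambda_i^-$ and the lower bound is attained. This computation does not use definiteness of $A$. For $A\succ0$ such an $X_*$ always exists, for example $X_*=V\begin{bmatrix}0\\ I_d\end{bmatrix}$.

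\textbf{Step 4 (semidefinite case, main obstacle).} For $A\succeq 0$, apply Steps 1--2 to $A+\epsilon I$. For every feasible $X$ this gives
\[
\tr(X\ts(A+\epsilon I)X)\ge-\sum_j\lambda_j^-(\epsilon),
\]
and we then let $\epsilon\to0$. The delicate point is that the $\lambda_j^-$ depend continuously on $\epsilon$ and converge to the correctly labeled eigenvalues of $(A,J_{n,d})$. My plan is to use that the pencil eigenvalues are the eigenvalues of $J_{n,d}A$, which is similar to the symmetric matrix $A^{1/2}J_{n,d}A^{1/2}$. Hence all these eigenvalues are real, and the sorted eigenvalues vary continuously with $A$. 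Since Step 1 shows the $\lambda^-$ are the $d$ smallest eigenvalues when $A\succ0$, we interpret $\lambda_1^-,\ldots,\lambda_d^-$ for singular $A$ as the $d$ smallest, consistent with the ordering in the statement. With that interpretation, continuity of the sorted eigenvalues gives the limit, and for fixed $X$ the left-hand side converges to $\tr(X\ts AX)$. This proves the inequality for all $A\succeq0$.
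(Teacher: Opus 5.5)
Your proof is correct. It also takes a genuinely different route from the paper, which gives no argument of its own and instead imports the lemma as a special case of Kova\v{c}-Striko and Veseli\'{c} \cite[Thm.~3.1]{kovavc1995trace}.

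You give a self-contained proof using only tools already in the paper. \cref{lemma:gen_evp_basic} puts the pencil in the form $V\ts J_{n,d}V=J_{n,d}$ with $V\ts AV$ diagonal. The key observation is that the constraint has exactly $d$ columns, matching the negative inertia of $J_{n,d}$. Hence the block $Y_2$ is square, so $Y_2\ts Y_2\succeq I_d$ upgrades to $Y_2Y_2\ts\succeq I_d$, and the bound $\tr(D_\beta Y_2Y_2\ts)\ge\tr(D_\beta)$ is immediate. The singular case then follows from the same perturbation-and-continuity device the paper uses in \cref{lemma:continuity_argument}.

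Your route shows exactly where the special structure of this case ($k_-=d$, no positive-type constraint columns) enters. It also makes explicit two points the statement leaves implicit. First, for singular $A$ the $\lambda_i^-$ are to be read as the $d$ smallest eigenvalues of the pencil, consistent with the stated ordering. Second, the ``min'' is really an infimum, as the paper's Example shows; your argument covers this because it bounds every feasible $X$. What the citation buys is generality: constraints of arbitrary signature, Hermitian pencils, and a characterization of attainment. None of that is needed here.

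One small correction in Step 4. For singular $A$, $J_{n,d}A$ need not be \emph{similar} to $A^{1/2}J_{n,d}A^{1/2}$. Take $n=d=1$ and $A=\begin{bmatrix}1&1\\1&1\end{bmatrix}$, exactly the paper's Example. Then $J_{1,1}A$ is a nonzero nilpotent matrix, while $A^{1/2}J_{1,1}A^{1/2}=0$. What is true, and all you need, is that the two matrices have the same characteristic polynomial. This follows from $\det(\lambda I-MN)=\det(\lambda I-NM)$ with the square matrices $M=J_{n,d}A^{1/2}$ and $N=A^{1/2}$. Realness of the spectrum and continuity of the sorted eigenvalues then follow as you say.
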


Finally, any matrix $X$ satisfying $X\ts J_{n,d}X=-I_d$ has a {\em hyperbolic CS decomposition}, an indefinite analogue of the CS decomposition for unitary matrices \cite[Thm.~2.5.3]{GoVa13}. The lemma below is adapted from  \cite[Thm.~3.2]{higham2003jorthogonal}.
\begin{lemma}\label{lemma:hyperbolic_CS}
    Assume that $n\geq d$. If $X\in \mathbb{R}^{(n+d)\times d}$ satisfies $X\ts J_{n,d}X = -I_d$, then there exist orthogonal matrices $Q, Z\in \mathbb{R}^{d\times d}$ and a matrix $P\in \mathbb{R}^{n\times d}$ with orthonormal columns such that
    \begin{equation*}
     X = \begin{bmatrix}
         P & 0 \\ 0 & Q
     \end{bmatrix}\begin{bmatrix}
         S \\ C
     \end{bmatrix}Z\ts, 
    \end{equation*}
    where $S=\Diag(s_i)$ and $C=\Diag(c_i)$ satisfy ${C^2 - S^2=I_d}$. Conversely, any $X$ having this factorization satisfies $X\ts J_{n,d}X=-I_d$.
\end{lemma}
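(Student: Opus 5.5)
The plan is to build the factorization directly from a thin SVD of the top block of $X$, using the constraint to control the bottom block. Partition
\[
X = \begin{bmatrix} X_1 \\ X_2 \end{bmatrix},\qquad X_1\in\mathbb{R}^{n\times d},\quad X_2\in\mathbb{R}^{d\times d}.
\]
Then $X\ts J_{n,d}X=-I_d$ reads $X_2\ts X_2 - X_1\ts X_1 = I_d$. Hence $X_2\ts X_2 = I_d + X_1\ts X_1$ is symmetric positive definite, and in particular $X_2$ is nonsingular.

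First, since $n\geq d$, take a thin SVD $X_1 = P S Z\ts$. Here $P\in\mathbb{R}^{n\times d}$ has orthonormal columns, $Z\in\mathbb{R}^{d\times d}$ is orthogonal, and $S=\Diag(s_i)$ with $s_i\geq 0$. Substituting into the constraint gives
\[
X_2\ts X_2 = Z(I_d+S^2)Z\ts .
\]
Set $C\equiv (I_d+S^2)^{1/2}=\Diag(c_i)$, which is diagonal with $c_i\geq 1$. It is invertible and satisfies $C^2-S^2=I_d$ by construction.

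Next, define $Q\equiv X_2 Z C^{-1}$. Then
\[
Q\ts Q = C^{-1}Z\ts (X_2\ts X_2) Z C^{-1} = C^{-1}(I_d+S^2)C^{-1} = I_d .
\]
Thus $Q$ is a square matrix with orthonormal columns, hence orthogonal. By definition $X_2 = QCZ\ts$. Stacking this with $X_1 = PSZ\ts$ gives exactly
\[
X=\begin{bmatrix}P&0\\0&Q\end{bmatrix}\begin{bmatrix}S\\C\end{bmatrix}Z\ts .
\]

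For the converse, use $P\ts P = I_d$ and $Q\ts Q=I_d$ to compute
\[
X\ts J_{n,d}X = Z\left(S P\ts P S - C Q\ts Q C\right)Z\ts = Z(S^2-C^2)Z\ts = -I_d .
\]

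There is no real obstacle in this argument. The only points needing care are the following.
\begin{itemize}
\item The assumption $n\geq d$ is what makes the thin SVD of $X_1$ produce an $n\times d$ factor $P$ with orthonormal columns. If $X_1$ is rank deficient, the SVD still supplies orthonormal completion columns, which correspond to zero entries $s_i=0$.
\item Positive definiteness of $X_2\ts X_2$ is what guarantees that $C$ is invertible, so that $Q$ is well defined.
\end{itemize}
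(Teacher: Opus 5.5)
Your proof is correct and complete. The constraint gives $X_2^TX_2 = I_d + X_1^TX_1$. The thin SVD $X_1 = PSZ^T$, which is where $n\ge d$ enters, turns this into $Z(I_d+S^2)Z^T$. Then $Q = X_2ZC^{-1}$ is forced to be orthogonal, and the converse is a one-line computation.

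The paper gives no proof of its own. It cites the lemma as ``adapted from'' Higham's hyperbolic CS decomposition, which is stated for square $J$-orthogonal matrices. Your route differs and is more elementary: it works directly with the rectangular $(n+d)\times d$ matrix and needs nothing beyond the SVD.

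The citation route applies to a given $X$ only after an extra step. One must first complete $X$ to a square $J$-orthogonal matrix $[Y, X]$ with $Y^TJ_{n,d}Y = I_n$ and $Y^TJ_{n,d}X = 0$. This is possible because the $J_{n,d}$-orthogonal complement of $\mathcal{R}(X)$ is $n$-dimensional and, by Sylvester's law of inertia, positive definite under $J_{n,d}$. One then reads off the last block column of the decomposition. The paper leaves this completion step implicit.

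What the citation gives is the full two-sided structure for the square case, which is more than \cref{thm:main_decomposition} needs. What your argument gives is a self-contained proof that shows plainly why $n\ge d$ is needed and why $X_2$ is automatically invertible. It also supplies exactly the two directions used when the proof of \cref{thm:main_decomposition} replaces the supremum over $X$ by one over $(P,Q,S,C,Z)$.
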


\section{Optimization over an indefinite manifold} \label{sec:indefinite_optimization}
This section presents and proves an intermediate result that admits a lower bound on the backward error. As with the expression in \cref{def:mu_binary} it can suffer from stability issues if evaluated directly, but it is useful in proving the main result in \cref{thm:main_decomposition}. 

\begin{lemma}\label{lemma:variational_first}
For any $A\in \mathbb{R}^{m\times n}$ and $R_\theta\in \mathbb{R}^{m\times d}$,
\begin{equation}\label{eqn:variational_first}
    \mu(A, R_\theta) = \sup_{X\ts J_{n,d}X = -I_d} \left(\|R_\theta\|_F^2 - \|[A, R_\theta]X\|_F^2\right)^{\frac{1}{2}}.
\end{equation}
\end{lemma}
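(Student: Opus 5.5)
Let $M\equiv [A,R_\theta]\ts [A,R_\theta]\in\mathbb{R}^{(n+d)\times(n+d)}$, which is symmetric positive semidefinite. Then $\|[A,R_\theta]X\|_F^2=\tr(X\ts MX)$, and the claim \cref{eqn:variational_first} is equivalent to
\[
\inf_{X\ts J_{n,d}X=-I_d}\tr(X\ts MX) \;=\; -\tr_{-}(AA\ts-R_\theta R_\theta\ts),
\]
by \cref{def:mu_binary}. The plan is to link the generalized eigenvalues of $(M,J_{n,d})$ to the eigenvalues of $AA\ts - R_\theta R_\theta\ts$, and then apply \cref{lemma:striko_trace}.

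\textbf{Spectral link.} First assume that $[A,R_\theta]$ has full column rank, so $M$ is positive definite. Write $G\equiv [A,R_\theta]$, so that $AA\ts-R_\theta R_\theta\ts = GJ_{n,d}G\ts$. The nonzero eigenvalues of $GJ_{n,d}G\ts$ coincide with those of $J_{n,d}G\ts G = J_{n,d}M$. These are the reciprocals of the generalized eigenvalues of $(M,J_{n,d})$, because $Mv=\lambda J_{n,d}v$ is equivalent to $J_{n,d}Mv=\lambda v$ (using $J_{n,d}^2=I$). Hence $J_{n,d}M$ has eigenvalues $\lambda$, where $\lambda$ ranges over the generalized eigenvalues.

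By \cref{lemma:gen_evp_basic}, $V\ts J_{n,d}V$ is diagonal, and Sylvester's law of inertia then gives exactly $n$ generalized eigenvectors with $v\ts J v>0$ and $d$ with $v\ts J v<0$. Since $v\ts Mv>0$, these correspond to $n$ positive and $d$ negative eigenvalues $\lambda$. The $d$ negative eigenvalues $\lambda_i^-$ of $J_{n,d}M$ are therefore exactly the negative eigenvalues of $GJ_{n,d}G\ts$, so $\sum_i\lambda_i^- = \tr_{-}(AA\ts-R_\theta R_\theta\ts)$. The sign conventions need care here: the $\lambda_i$ in \cref{lemma:gen_evp_basic} are $a_i/b_i$, which is the same as $Mv=\lambda Jv$.

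\textbf{Attainment.} Apply \cref{lemma:striko_trace}. Normalize the $d$ eigenvectors with $v_i\ts Jv_i<0$ so that $v_i\ts Jv_i=-1$. Their mutual $J$-orthogonality, given by the diagonal form in \cref{lemma:gen_evp_basic}, makes $X_*=[v_1,\dots,v_d]$ feasible. Then
\[
\tr(X_*\ts MX_*)=\sum_i \lambda_i^- v_i\ts Jv_i=-\sum_i\lambda_i^-,
\]
so the lemma gives equality: the minimum is $-\sum\lambda_i^- = -\tr_-(\cdot)$. Substituting into \cref{def:mu_binary} yields \cref{eqn:variational_first}, with the supremum attained, in the full-rank case.

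\textbf{General case (main obstacle).} This is the rank-deficient case, where $M$ is only semidefinite and attainment may fail, which is why the statement uses a supremum. I would use \cref{lemma:continuity_argument}. Augmenting to $A_\epsilon,R_{\theta,\epsilon}$ changes $\|R_\theta\|_F^2$ by $d\epsilon^2$ and gives
\[
\|[A_\epsilon,R_{\theta,\epsilon}]X\|_F^2=\|[A,R_\theta]X\|_F^2+\epsilon^2\|X\|_F^2 .
\]
This yields two inequalities.
\begin{itemize}
\item[(i)] For every feasible $X$, $\|R_\theta\|_F^2-\|GX\|_F^2 \ge \mu^2(A_\epsilon,R_{\theta,\epsilon}) - d\epsilon^2$ is false in general, but the reverse inequality holds:
\[
\|R_\theta\|_F^2-\|GX\|_F^2 = \|R_{\theta,\epsilon}\|_F^2-\|G_\epsilon X\|_F^2 - d\epsilon^2 + \epsilon^2\|X\|_F^2 .
\]
Taking the supremum of the right side over $X$ with $\|X\|_F$ bounded and letting $\epsilon\to0$ gives $\sup\le\mu^2$. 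This requires a separate argument that the supremum is not approached only by unbounded $X$. More simply, \cref{lemma:striko_trace} holds for semidefinite $M$ directly, giving $\sup\le\mu^2$ as long as the spectral link is checked via $\epsilon$-limits.
\item[(ii)] For the reverse inequality, take the optimal $X_\epsilon$ for the augmented problem and note that
\[
\|R_\theta\|_F^2-\|GX_\epsilon\|_F^2 \ge \mu^2(A_\epsilon,R_{\theta,\epsilon})-d\epsilon^2 \to \mu^2 .
\]
\end{itemize}
Thus the inequality $\sup\ge\mu^2$ follows cleanly. For $\sup\le\mu^2$ I would invoke \cref{lemma:striko_trace} on $M$ itself, together with eigenvalue continuity of the pencil's negative part, which is the delicate point.
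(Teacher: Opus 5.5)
Your full-rank argument is the paper's. You diagonalize the pencil $(M,J_{n,d})$, use inertia to get $n$ positive and $d$ negative generalized eigenvalues, and identify them with the nonzero eigenvalues of $AA^T-R_\theta R_\theta^T$. You do this via the shared nonzero spectrum of $GJ_{n,d}G^T$ and $G^TGJ_{n,d}\sim J_{n,d}M$; the paper does it via $U=[A,R_\theta]V$, which is equivalent. Attainment then comes from \cref{lemma:striko_trace} with $J_{n,d}$-normalized eigenvectors. One slip: the eigenvalues of $J_{n,d}M$ are the generalized eigenvalues themselves, not their reciprocals, as your next sentence correctly says.

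The gap is in the rank-deficient case. Your (ii) correctly gives $\sup\ge\mu^2$, but you leave $\sup\le\mu^2$ open. You say it needs either boundedness of near-maximizers, or \cref{lemma:striko_trace} for singular $M$ plus continuity of the pencil's negative eigenvalues. Neither is needed. The second route would also force you to sort out the zero eigenvalues of $J_{n,d}M$, which can be defective: for $A=R_\theta=[1]$, $J_{1,1}M$ is a nonzero nilpotent.

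Your own identity already finishes the job if you let $\epsilon\to0$ \emph{before} taking the supremum. Write $f(X)=\|R_\theta\|_F^2-\|[A,R_\theta]X\|_F^2$ and let $f_\epsilon$ be its augmented analogue. Fix any feasible $X$. Since $[A_\epsilon,R_{\theta,\epsilon}]$ has full column rank, the full-rank case gives $f_\epsilon(X)\le\mu^2(A_\epsilon,R_{\theta,\epsilon})$. Hence
\[
f(X)=f_\epsilon(X)-d\epsilon^2+\epsilon^2\|X\|_F^2\le \mu^2(A_\epsilon,R_{\theta,\epsilon})-d\epsilon^2+\epsilon^2\|X\|_F^2\longrightarrow\mu^2(A,R_\theta)
\]
by \cref{lemma:continuity_argument}, with $X$ held fixed. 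So $f(X)\le\mu^2$ for every feasible $X$, which is the missing inequality. Together with your (ii), this is exactly the continuity argument the paper invokes in one line.
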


\begin{proof}
    Assume to start that $[A,R_\theta]$ has full column rank, so that $[A,R_\theta]\ts[A,R_\theta]$ is symmetric positive definite, and consider the pencil $([A,R_\theta]\ts[A,R_\theta], J_{n,d})$. By \cref{lemma:gen_evp_basic}, there exists a nonsingular $V\in \mathbb{R}^{(n+d)\times (n+d)}$ such that $V\ts J_{n,d}V = J_{n,d}$\footnote{Without loss of generality, the columns of $V$ can be permuted and scaled and so that the diagonal entries of $V\ts J_{n,d}V$ are $\pm 1$. Preservation of inertia implies that there are exactly $n$ positive and $d$ negative eigenvalues.} and 
    \begin{equation}\label{eqn:genl_eig_V}
        [A,R_\theta]\ts [A,R_\theta]V = J_{n,d}V\Lambda, \quad \Lambda \equiv \Diag(\lambda_{n}^+,\ldots,\lambda_1^+,\lambda_1^-,\ldots,\lambda_{d}^-).
    \end{equation}
    The generalized eigenvalues are ordered so that
    \[\lambda_n^+\geq \cdots \geq \lambda_1^+ > 0 > \lambda_1^- \geq \cdots \geq \lambda_d^-.\]
    Applying \cref{lemma:striko_trace} to the positive definite matrix $[A,R_\theta]\ts[A,R_\theta]$ then implies that
    \begin{equation}\label{eqn:negative_genl_eigvals}
        \min_{X\ts J_{n,d}X = -I_d} \|[A,R_\theta]X\|_F^2 = -\sum_{i=1}^d\lambda_i^{-},
    \end{equation}
    where equality is attained by setting the columns of $X$ to be the generalized eigenvectors corresponding to $\lambda_{1}^-,\ldots,\lambda_d^-$. 
    
    Finally, the generalized eigenvalues of $([A,R_\theta]\ts[A,R_\theta], J_{n,d})$ are precisely the nonzero eigenvalues of $AA\ts-R_\theta R_\theta\ts$. To see this, left-multiply both sides of \cref{eqn:genl_eig_V} by $[A,R_\theta]J_{n,d}$ to get
    \begin{equation}\label{eqn:U_relation_AV}
        (AA\ts-R_\theta R_\theta \ts)U = U\Lambda,\quad U \equiv [A,R_\theta]V,
    \end{equation}
    where the columns of $U$ are linearly independent since $[A,R_\theta]$ has full column rank. Thus, the $n+d$ columns of $U$ account for all nonzero eigenvectors of the rank-($n+d$) matrix $AA\ts - R_\theta R_\theta\ts$.    
    
    Starting from \cref{def:mu_binary}, we thus find that
    \begin{align*}
        \mu^2(A,R_\theta) &= \|R_\theta\|_F^2 + \tr_-\left(AA\ts - R_\theta R_\theta\ts\right) \\
        &= \|R_\theta\|_F^2 + \sum_{i=1}^d\lambda_i^{-} \\
        &= \max_{X\ts J_{n,d}X = -I_d} \|R_\theta\|_F^2 - \|[A, R_\theta]X\|_F^2.
    \end{align*}
    The case where $[A,R_\theta]$ does not have full column rank is proved by making a continuity argument as in \cref{lemma:continuity_argument}.
\end{proof}

Equality in \cref{eqn:variational_first} is not necessarily attainable when $[A,R_\theta]$ does not have full column rank because the set of matrices $X$ satisfying $X\ts J_{n,d}X = -I_d$ is not bounded. 

\begin{example}
    Let $A = R_\theta = [1]$, 
    in which case $\mu(A,R_\theta) = 1$. Equality cannot be attained for any $X$, but we can come arbitrarily close by taking $X=[s,-\sqrt{1+s^2}]\ts$ as $s\rightarrow \infty$.
\end{example}

\begin{remark}
    The eigenvalue relations for the matrix $AA\ts - R_\theta R_\theta \ts$ and the pencil $([A,R_\theta]\ts[A,R_\theta], J_{n,d})$ can be expressed simultaneously by the indefinite generalized eigenvalue problem
    \begin{equation*}
        \begin{bmatrix}
            0 & [A,R_\theta]\ts \\
            [A,R_\theta] & 0
        \end{bmatrix}\begin{bmatrix}
            v \\ u
        \end{bmatrix} = \xi \begin{bmatrix}
            J_{n,d} & 0 \\ 0 & I_m
        \end{bmatrix}\begin{bmatrix}
            v \\ u
        \end{bmatrix}.
    \end{equation*}
    The generalized eigenvalues $\xi$ satisfy $\xi^2=\lambda$, and so are either real or purely imaginary.
\end{remark}

\subsection{Eigenvalue expressions}
For problems with a single right-hand side, it is known \cite[(2.15)]{walden1995optimal} that the backward error can be expressed in terms of a singular value problem: 
\begin{equation*}
    \mu(A, r_\theta) = \min\left\{\|r_\theta\|_2, \sigma_{\min}\left(\left[A, \|r_\theta\|_2(I-r_\theta r_\theta^\dagger)\right]\right)\right\}.
\end{equation*}
Although this expression avoids forming $A\ts A$ or $AA\ts$ explicitly, it overstates the size of the problem if considered naively: the matrix whose smallest singular value is to be computed is $m\times (n+m)$. But since the backward error is rotation-invariant, the QR factorization of $[A,r_\theta]$ can be used to reduce the size to $(n+1)\times (2n+1)$. 

The true size of the problem is better captured by the generalized eigenvalue problem \cref{eqn:genl_eig_V}. For problems with a single right-hand side, it may be formulated as follows:
\begin{theorem}\label{thm:gevp_expression_mu}
    For $A\in \mathbb{R}^{m\times n}$ and $r_\theta\in \mathbb{R}^m$, 
    \begin{equation*}
        \mu(A,r_\theta) = \lambda_{\min}\left(\begin{bmatrix}
            A\ts A + \|r_\theta\|_2^2I & A\ts r_\theta \\
            r_\theta\ts A & 0
        \end{bmatrix}, J_{n,1}\right)^\frac{1}{2}.
    \end{equation*}
\end{theorem}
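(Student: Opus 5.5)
The plan is to reduce the statement to the generalized eigenvalue problem already analyzed in the proof of \cref{lemma:variational_first}, specialized to $d=1$. The key observation is that the matrix in the pencil is a shift of the Gram matrix:
\[
M \equiv \begin{bmatrix} A\ts A + \|r_\theta\|_2^2 I & A\ts r_\theta \\ r_\theta\ts A & 0 \end{bmatrix} = [A,r_\theta]\ts[A,r_\theta] + \|r_\theta\|_2^2 J_{n,1}.
\]
This holds because the $(2,2)$ entry of $[A,r_\theta]\ts[A,r_\theta]$ is $\|r_\theta\|_2^2$, and adding $\|r_\theta\|_2^2 J_{n,1}$ adds $\|r_\theta\|_2^2$ to the top-left block and cancels that entry. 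Consequently $Mv = \lambda J_{n,1} v$ holds if and only if $[A,r_\theta]\ts[A,r_\theta]v = (\lambda - \|r_\theta\|_2^2)J_{n,1}v$. So the generalized eigenvalues of $(M,J_{n,1})$ are exactly those of $([A,r_\theta]\ts[A,r_\theta],J_{n,1})$ shifted up by $\|r_\theta\|_2^2$.

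First I treat the generic case where $[A,r_\theta]$ has full column rank. The proof of \cref{lemma:variational_first} with $d=1$ shows that the pencil $([A,r_\theta]\ts[A,r_\theta],J_{n,1})$ has real eigenvalues $\lambda_n^+\geq\cdots\geq\lambda_1^+>0>\lambda_1^-$. It also shows that $\mu^2(A,r_\theta) = \|r_\theta\|_2^2 + \lambda_1^-$. After the shift, the eigenvalues of $(M,J_{n,1})$ are therefore $\lambda_i^+ + \|r_\theta\|_2^2 > \|r_\theta\|_2^2$ together with $\lambda_1^- + \|r_\theta\|_2^2 = \mu^2 < \|r_\theta\|_2^2$. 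Hence the smallest one is $\mu^2(A,r_\theta)$, and taking square roots gives the claim. It is nonnegative since $\mu^2\ge 0$ by definition \cref{def:mu_min_proj}.

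For the degenerate case I would use \cref{lemma:continuity_argument}, replacing $A$ by $[A;\epsilon I;0]$ and $r_\theta$ by $[r_\theta;0;\epsilon]$. These augmented inputs have $[A_\epsilon, r_\epsilon]$ of full column rank and leave $n$ and $d=1$ unchanged. The corresponding matrix $M_\epsilon$ differs from $M$ only by $O(\epsilon^2)$ terms: $A\ts A$ becomes $A\ts A+\epsilon^2 I$, $\|r_\theta\|_2^2$ becomes $\|r_\theta\|_2^2+\epsilon^2$, and $A\ts r_\theta$ is unchanged. Since $J_{n,1}$ is nonsingular, the generalized eigenvalues of $(M_\epsilon,J_{n,1})$ are the eigenvalues of $J_{n,1}M_\epsilon$, which depend continuously on $\epsilon$.

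This continuity step is the main obstacle. The eigenvalues of $J_{n,1}M$ are a priori only known to be real in the generic case. I would argue that they are real for every $\epsilon>0$ by the generic case, so their limits as $\epsilon\to 0$ are real. The smallest one then converges to the smallest eigenvalue of $J_{n,1}M$, since the ordered spectrum varies continuously. Combining this with \cref{lemma:continuity_argument} gives $\lambda_{\min}(M,J_{n,1}) = \lim_{\epsilon\to 0}\mu^2(A_\epsilon,r_\epsilon) = \mu^2(A,r_\theta)$. As a sanity check, when $r_\theta=0$ we get $J_{n,1}M=\Diag(A\ts A,0)$ with minimum eigenvalue $0=\mu$, as expected.
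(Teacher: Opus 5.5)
Your proposal is correct and follows the paper's proof: in the full-column-rank case you shift the pencil $([A,r_\theta]\ts[A,r_\theta],J_{n,1})$ by $\|r_\theta\|_2^2 J_{n,1}$, and you handle the rank-deficient case with the continuity argument of \cref{lemma:continuity_argument}. The only difference is that you spell out what the paper leaves implicit, namely why the eigenvalues of $J_{n,1}M$ stay real and why the smallest one converges in the $\epsilon\to 0$ limit.
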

\begin{proof}
    When $[A,r_\theta]$ has full column rank, take the eigenvalue relations in the proof of \cref{lemma:variational_first} and shift by $\|r_\theta\|_2^2J_{n,1}$. When $[A,r_\theta]$ does not have full column rank, use a continuity argument as in \cref{lemma:continuity_argument}.
\end{proof}
Since the generalized eigenvalues in \cref{thm:gevp_expression_mu} are solutions to the equation 
\[
    \det\left(\begin{bmatrix}
        A\ts A + (\|r_\theta\|_2^2 - \mu^2)I & A\ts r_\theta \\
        r_\theta\ts A & \mu^2
    \end{bmatrix}\right) = 0,
\]
we can recover an expression for the backward error that appears in \cite[(3.13)]{gratton2012accuracy}. 
\begin{corollary}\label{cor:backward_error_fixed_point}
    For $A\in \mathbb{R}^{m\times n}$ and $r_\theta\in \mathbb{R}^m$, $\mu(A,r_\theta)$ is the smallest nonnegative number solving
    \begin{equation*}
        \mu^2 = r_\theta\ts A(A\ts A + (\|r_\theta\|_2^2 - \mu^2)I)^{-1}A\ts r_\theta.
    \end{equation*}
\end{corollary}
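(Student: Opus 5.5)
Writing $\mu=\mu(A,r_\theta)$ and $M(t)\equiv A\ts A+(\|r_\theta\|_2^2-t)I$, I will derive the corollary directly from \cref{thm:gevp_expression_mu} by turning the generalized eigenvalue condition into a scalar equation via a Schur complement. First assume $[A,r_\theta]$ has full column rank, so that the matrix $K\equiv\begin{bmatrix} A\ts A+\|r_\theta\|_2^2 I & A\ts r_\theta\\ r_\theta\ts A & 0\end{bmatrix}$ satisfies $K=[A,r_\theta]\ts[A,r_\theta]+\|r_\theta\|_2^2J_{n,1}$. The generalized eigenvalues of $(K,J_{n,1})$ are those of $([A,r_\theta]\ts[A,r_\theta],J_{n,1})$ shifted by $\|r_\theta\|_2^2$. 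From the proof of \cref{lemma:variational_first}, these are $\lambda_j^++\|r_\theta\|_2^2$ ($n$ of them, all $>\|r_\theta\|_2^2$) and $\lambda_1^-+\|r_\theta\|_2^2=\mu^2$, the latter since $\mu^2=\|r_\theta\|_2^2+\lambda_1^-$. Thus $\mu^2$ is the unique eigenvalue in $[0,\|r_\theta\|_2^2)$, consistent with $\mu\le\|r_\theta\|_2$.

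The eigenvalues are the roots $t$ of $\det(K-tJ_{n,1})=\det\begin{bmatrix} M(t) & A\ts r_\theta\\ r_\theta\ts A & t\end{bmatrix}=0$. For $t<\|r_\theta\|_2^2$ we have $M(t)\succ0$, so the Schur complement gives
\[
\det(K-tJ_{n,1})=\det M(t)\,\bigl(t-f(t)\bigr),\qquad f(t)\equiv r_\theta\ts A\,M(t)^{-1}A\ts r_\theta .
\]
Hence on $[0,\|r_\theta\|_2^2)$ the roots of the determinant are exactly the solutions of $t=f(t)$. Since $\mu^2$ is the only eigenvalue in this interval, it is the only solution there. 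Any further nonnegative solutions (at or beyond $\|r_\theta\|_2^2$, where $M(t)$ is invertible) correspond to eigenvalues $\ge\|r_\theta\|_2^2$, which are larger. So $\mu$ is the smallest nonnegative solution of $\mu^2=f(\mu^2)$. As a sanity check, on $[0,\|r_\theta\|_2^2)$ the function $f$ is increasing and convex with $f(0)\ge0$, which matches a first crossing of the line $t$.

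The step I expect to be the main obstacle is the degenerate case where $[A,r_\theta]$ is rank-deficient, because a continuity argument in the style of \cref{lemma:continuity_argument} must also preserve the property of being the \emph{smallest} root. There are two sub-cases.

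\begin{itemize}
\item If $A\ts r_\theta\neq0$, then $f(0)>0$, and I would argue directly. The quantity $\det(K-tJ_{n,1})$ is a polynomial in $t$, and \cref{thm:gevp_expression_mu} identifies $\mu^2$ as its smallest root, which in the limit still lies in $[0,\|r_\theta\|_2^2]$. The Schur complement argument then applies wherever $M(t)\succ0$.
\item If $A\ts r_\theta=0$, then $f\equiv0$ and the smallest solution is $0$. This matches $\mu=0$, which follows from \cref{def:mu_min_proj} with $Y=r_\theta$.
\end{itemize}

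The boundary value $t=\|r_\theta\|_2^2$ arises when $\mu=\|r_\theta\|_2$, i.e.\ when $A$ has a zero singular value interacting with $r_\theta$. There $M$ may be singular, and I would handle it by interpreting $f$ via the pseudoinverse, or by taking limits of $f$, noting that $f$ tends to $+\infty$ as $t$ approaches a pole unless the corresponding component of $A\ts r_\theta$ vanishes.
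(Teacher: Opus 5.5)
Your proposal is correct and is essentially the paper's argument: \cref{thm:gevp_expression_mu} makes $\mu^2$ the smallest root of $\det(K-tJ_{n,1})$, and a Schur complement turns this into the scalar equation; your observation that $M(t)\succ 0$ on $[0,\|r_\theta\|_2^2)$ supplies the justification the paper leaves implicit. The one loose end is the case $\mu=\|r_\theta\|_2$ with $A$ column-rank deficient, which the paper also passes over: there $\det(K-tJ_{n,1})$ contains the factor $(\|r_\theta\|_2^2-t)^{n-\operatorname{rank}(A)}$ and vanishes at $t=\|r_\theta\|_2^2$ regardless of $f$, so the pseudoinverse identity $\|AA^\dagger r_\theta\|_2^2=\|r_\theta\|_2^2$ needs its own check, e.g.\ let $t\uparrow\|r_\theta\|_2^2$ in $f(t)>t$ and compare with $\|AA^\dagger r_\theta\|_2\le\|r_\theta\|_2$.
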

Considered as a function of $\mu^2$, the right-hand side is convex and monotonically increasing. The \KW estimate is the square root of the value of this function at zero. Given the right singular vectors of $A$, the equation can be solved using fixed-point iteration or Newton's method, and so the backward error can be computed to a high degree of accuracy.

\section{Main results} \label{sec:main_results}
From looking at \cref{lemma:variational_first}, it might be tempting to split the expression in \cref{eqn:variational_first} columnwise, with each column of $X$ being one generalized eigenvector. As it turns out, it is more natural to consider the hyperbolic CS decomposition of $X$. The proof of \cref{thm:main_decomposition} does exactly that, and as a result decomposes the backward error into a sum of smaller backward error problems. 

\begin{proof}[Proof of \cref{thm:main_decomposition}]
     Since \cref{def:mu_min_proj} implies that $\mu(A,R_\theta) = \mu(R_\theta, A)$, we may assume without loss of generality that $n\geq d$. Proceeding from \cref{lemma:variational_first}, consider the hyperbolic CS decomposition of any feasible $X$. By \cref{lemma:hyperbolic_CS}, optimizing over $X$ is equivalent to optimizing over its constituent parts $P, Q, S, C, Z$. Thus, 
    \begin{align*}
        \mu^2(A,R_\theta) &= \sup_{X\ts J_{n,d}X=-I_d}\|R_\theta\|_F^2 - \|[A,R_\theta]X\|_F^2 \\
        &= \sup_{P,Q,S,C,Z} \|R_\theta\|_F^2 - \left\|[AP,R_\theta Q]\begin{bmatrix}S\\C\end{bmatrix}Z\ts\right\|_F^2 \\
        &= \sup_{P,Q,S,C} \|R_\theta Q\|_F^2 - \left\|[AP,R_\theta Q]\begin{bmatrix}S\\C\end{bmatrix}\right\|_F^2 \\
        &= \sup_{P,Q,S,C} \sum_{i=1}^{d}\left(\|R_\theta q_i\|_2^2 - \left\|[Ap_i,R_\theta q_i]\begin{bmatrix}s_i\\c_i\end{bmatrix}\right\|_2^2\right) \\
        &= \max_{P,Q} \sum_{i=1}^{d}\left(\sup_{c_i^2-s_i^2=1}\|R_\theta q_i\|_2^2 - \left\|[Ap_i,R_\theta q_i]\begin{bmatrix}s_i\\c_i\end{bmatrix}\right\|_2^2\right) \\
        &= \max_{P,Q} \sum_{i=1}^d \mu^2(Ap_i, R_\theta q_i),
    \end{align*}
    where the final step uses \cref{lemma:variational_first}. Note that the maximum is attainable because the set of feasible $\{P, Q\}$ (both matrices with orthonormal columns) is compact. 
\end{proof}

A secondary result presents a clean expression for each summand in \cref{eqn:main_decomposition}.

\begin{theorem}\label{thm:rank_two_eigenvalue_problem}
    For $a\in \mathbb{R}^m$ and $r \in \mathbb{R}^m$, at least one of which is nonzero, 
    \begin{equation*}
        \mu(a,r) = \frac{2 |a\ts r|}{\|a+r\|_2 + \|a-r\|_2},
    \end{equation*}
    where $\mu(a,r)$ is as defined in \cref{def:mu_min_proj}.
\end{theorem}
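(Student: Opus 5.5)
Our starting point is \cref{def:mu_binary}: with $n=d=1$ we have
\[
\mu^2(a,r)=\|r\|_2^2+\tr_-(aa\ts-rr\ts).
\]
The matrix $M\equiv aa\ts - rr\ts$ has rank at most two, and its range lies in $\mathrm{span}\{a,r\}$. Its nonzero eigenvalues therefore coincide with those of the $2\times 2$ matrix $[a,r]\ts[a,r]\,J_{1,1}$. This is the same observation that gave \cref{eqn:U_relation_AV}, since $[a,r]J_{1,1}[a,r]\ts = M$ and $XY$ and $YX$ share nonzero eigenvalues. Computing with this matrix, we get
\[
\tr M = \|a\|_2^2-\|r\|_2^2, \qquad \det\!\left(\begin{bmatrix}\|a\|^2 & -a\ts r\\ a\ts r & -\|r\|^2\end{bmatrix}\right) = (a\ts r)^2-\|a\|_2^2\|r\|_2^2 \le 0
\]
by Cauchy--Schwarz. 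So when $a,r$ are linearly independent there is exactly one negative eigenvalue
\[
\lambda_- = \tfrac12\Bigl(\|a\|^2-\|r\|^2-\sqrt{(\|a\|^2-\|r\|^2)^2+4(\|a\|^2\|r\|^2-(a\ts r)^2)}\Bigr).
\]
The discriminant simplifies to $(\|a\|^2+\|r\|^2)^2-4(a\ts r)^2$.

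Adding $\|r\|^2$ gives
\[
\mu^2 = \tfrac12\Bigl(\alpha-\sqrt{\alpha^2-4\beta^2}\Bigr),\qquad \alpha\equiv\|a\|^2+\|r\|^2,\quad \beta\equiv a\ts r.
\]
In the degenerate cases (parallel vectors, or one of them zero), $\tr_-$ is either $0$ or $\|a\|^2-\|r\|^2$ when that is negative. A direct check shows the same formula still holds, choosing the branch $\tfrac12(\alpha-|\cdot|)$. Alternatively these cases follow by the continuity argument of \cref{lemma:continuity_argument}, or simply from continuity of both sides, which is valid because at least one of $a,r$ is nonzero so the denominator stays positive.

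The remaining step is algebraic: identify $\tfrac12(\alpha-\sqrt{\alpha^2-4\beta^2})$ with the square of $2|\beta|/(\|a+r\|+\|a-r\|)$. Write $u=\|a+r\|_2$ and $v=\|a-r\|_2$. Then
\[
u^2+v^2=2\alpha,\qquad u^2-v^2=4\beta,\qquad uv=\sqrt{\alpha^2-4\beta^2}.
\]
The last identity holds because $u^2v^2=\tfrac14\bigl((u^2+v^2)^2-(u^2-v^2)^2\bigr)=\alpha^2-4\beta^2$. Hence
\[
\tfrac12\bigl(\alpha-\sqrt{\alpha^2-4\beta^2}\bigr)=\tfrac14(u^2+v^2-2uv)=\tfrac14(u-v)^2 .
\]
Finally, $|u-v| = |u^2-v^2|/(u+v) = 4|\beta|/(u+v)$, so
\[
\mu = \tfrac12|u-v| = \frac{2|a\ts r|}{u+v},
\]
as claimed. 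The expected obstacle is only bookkeeping: making sure the correct (negative) root is selected, and that the degenerate rank-one cases agree with the formula. The core identity $uv=\sqrt{\alpha^2-4\beta^2}$ is what makes the closed form stable, since it avoids the cancellation in $\alpha-\sqrt{\alpha^2-4\beta^2}$.
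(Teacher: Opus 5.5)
Your proof is correct and is essentially the paper's argument. The paper reaches $\mu^2=\tfrac12\bigl(\alpha-\sqrt{\alpha^2-4\beta^2}\bigr)$ as the smallest nonnegative root of the scalar equation in \cref{cor:backward_error_fixed_point}, which is exactly the characteristic equation of your $2\times 2$ matrix $[a,r]\ts[a,r]J_{1,1}$ shifted by $\|r\|_2^2$; it then does the same algebra ($\|a+r\|_2\|a-r\|_2=\sqrt{\alpha^2-4\beta^2}$, $\mu=\tfrac12|u-v|$, rationalization), and entering directly through \cref{def:mu_binary} with an explicit check of the parallel and zero cases makes your version slightly more self-contained.
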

\begin{proof}
    By \Cref{cor:backward_error_fixed_point}, $\mu(a,r)$ is the smallest nonnegative solution to the equation
    \begin{equation*}
        \mu^2 = \frac{(a\ts r)^2}{\|a\|_2^2 + \|r\|_2^2 - \mu^2}.
    \end{equation*}
    Solving the resulting quadratic equation yields 
    \begin{align*}
        \mu(a,r) &= \left(\frac{\|a\|_2^2 + \|r\|_2^2 -\sqrt{(\|a\|_2^2 + \|r\|_2^2)^2 - 4(a\ts r)^2}}{2}\right)^{\frac{1}{2}} \\
        &= \left(\frac{\|a\|_2^2 + \|r\|_2^2 - \|a+r\|_2\|a-r\|_2}{2}\right)^{\frac{1}{2}} \\
        &= \left(\frac{\|a+r\|_2^2 + \|a-r\|_2^2 - 2\|a+r\|_2\|a-r\|_2}{4}\right)^{\frac{1}{2}}\\
        &= \frac{|\|a+r\|_2 - \|a-r\|_2|}{2}.
    \end{align*}
    Although elegant, this formula is unstable. The stable formulation is
    \begin{equation*}
        \mu = \frac{|\|a+r\|_2 - \|a-r\|_2|}{2}\cdot \frac{\|a+r\|_2 + \|a-r\|_2}{\|a+r\|_2+\|a-r\|_2} = \frac{2|a\ts r|}{\|a+r\|_2+\|a-r\|_2},
    \end{equation*}
    which completes the proof.
\end{proof}

For problems with a single right-hand side, \cref{thm:main_decomposition} means that the backward error can be ``explained'' by a single direction $Ap$. The next theorem gives an expression for the optimal $p$. 

\begin{theorem}\label{thm:main_single_rhs}
    Let $A\in \mathbb{R}^{m\times n}$ and $r_\theta \in \mathbb{R}^m$. If $\mu(A, r_\theta) > 0$, then
    \begin{equation}\label{eqn:main_single_rhs}
        \mu(A,r_\theta) = \max_{\|p\|_2=1}\frac{2|p\ts A\ts r_\theta|}{\|Ap+r_\theta\|_2 + \|Ap-r_\theta\|_2},
    \end{equation}
    and the maximum is attained by $p\propto \left(A\ts A + (\|r_\theta\|_2^2 - \mu^2)I\right)^{-1}A\ts r_\theta$.
\end{theorem}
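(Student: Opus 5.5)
The formula \cref{eqn:main_single_rhs} follows by specializing \cref{thm:main_decomposition} to $d=1$ and then identifying the maximizer through the fixed-point characterization in \cref{cor:backward_error_fixed_point}.

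\textbf{The formula.} With $d=1$ we have $k=1$ (if $n\ge 1$), and $Q$ is just $q=\pm1$. Since $\mu(a,r)$ is invariant under sign changes of $r$, \cref{thm:main_decomposition} gives
\[
\mu(A,r_\theta)=\max_{\|p\|_2=1}\mu(Ap,r_\theta).
\]
The hypothesis $\mu>0$ forces $r_\theta\neq 0$, so \cref{thm:rank_two_eigenvalue_problem} applies to each pair $(Ap,r_\theta)$. Substituting its closed form yields \cref{eqn:main_single_rhs} at once.

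\textbf{The maximizer.} Write $\beta^2=\|r_\theta\|_2^2$, $M=A\ts A+(\beta^2-\mu^2)I$, and $p_*=M^{-1}A\ts r_\theta/\|M^{-1}A\ts r_\theta\|_2$. First I will check that $M$ is positive definite, i.e.\ $\mu^2<\beta^2+\sigma_{\min}^2(A)$. This should follow from \cref{thm:gevp_expression_mu}: $\mu^2$ is the smallest negative-type generalized eigenvalue, and the corollary's characterization says it is the smallest root of a function that is finite on $[0,\beta^2+\sigma_{\min}^2)$. Alternatively, the trivial bound $\mu\le\|r_\theta\|_2$ gives $\mu^2\le\beta^2$ directly. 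Equality there would require $\mu=\|r_\theta\|_2$, and this edge case has to be ruled out separately or handled by continuity.

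Next, write $y=M^{-1}A\ts r_\theta$, so that $\mu^2=r_\theta\ts Ay=y\ts My$ by \cref{cor:backward_error_fixed_point}. It suffices to verify that $\mu(Ay,r_\theta)=\mu$, which by scale invariance in $p$ (the formula is homogeneous of degree zero in $p$ after normalizing) means checking, for $a=Ay/\|y\|$,
\[
\mu^2\stackrel{?}{=}\frac{(a\ts r_\theta)^2}{\|a\|^2+\beta^2-\mu^2}.
\]
The right side equals $\dfrac{(y\ts A\ts r_\theta)^2}{y\ts A\ts Ay+(\beta^2-\mu^2)y\ts y}=\dfrac{\mu^4}{y\ts My}=\mu^2$. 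So $\mu^2$ solves the scalar fixed-point equation for $(a,r_\theta)$, whence $\mu(a,r_\theta)\le\mu$, since $\mu(a,r_\theta)$ is the smallest nonnegative root. Combined with the maximization, which gives $\mu(a,r_\theta)\le\mu$ for every $a=Ap$, I still need the reverse inequality $\mu(a,r_\theta)\ge\mu$.

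\textbf{Main obstacle.} The delicate step is showing that $\mu^2$ is the \emph{smallest} root for the pair $(a,r_\theta)$, not the larger one. The scalar quadratic has roots $\mu_-^2\le\mu_+^2$ with $\mu_-^2+\mu_+^2=\|a\|^2+\beta^2$, and the larger root satisfies $\mu_+^2\ge(\|a\|^2+\beta^2)/2$. I plan to show $\mu^2<(\|a\|^2+\beta^2)/2$ using $\mu^2=y\ts My/\|y\|^2\cdot\|y\|^2$ together with positivity of $M$. Concretely, I expect $\beta^2-\mu^2>0$ to give $\mu^2<\|a\|^2+\beta^2-\mu^2$, which is the needed inequality. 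If this direct argument is messy, the fallback is to use $\mu(A,r_\theta)\ge\mu(a,r_\theta)$ from the maximization and a continuity argument in $\mu$ to exclude the larger root.
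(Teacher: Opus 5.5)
Your derivation of the formula is the paper's argument: specialize \cref{thm:main_decomposition} to $d=1$ and substitute \cref{thm:rank_two_eigenvalue_problem}. Your check that $\mu^2$ is \emph{a} root of the scalar equation for $a=Ay/\|y\|_2$ is also correct. The gap is at the step you flagged, and the justification you sketch does not work. Neither $\beta^2-\mu^2>0$ nor positivity of $M$ implies $\mu^2<\|a\|_2^2+\beta^2-\mu^2$. That inequality says $2\mu^2<\|a\|_2^2+\beta^2$, which $\mu^2<\beta^2$ cannot control when $\|a\|_2$ is small.

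Your own relations show what is really needed. From $a^Tr_\theta=\mu^2/\|y\|_2$ and $\mu^2(\|a\|_2^2+\beta^2-\mu^2)=(a^Tr_\theta)^2$, the other root of the quadratic is $\|a\|_2^2+\beta^2-\mu^2=\mu^2/\|y\|_2^2$. So $\mu^2$ is the \emph{smaller} root if and only if $\|y\|_2\le 1$. That is a separate fact about $y=M^{-1}A^Tr_\theta$, and nothing in your proposal establishes it. Your fallback does not help either. The maximization only gives $\mu(a,r_\theta)\le\mu$, which is fully consistent with $\mu^2$ being the larger root, and no continuity path is specified that would exclude it.

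The missing ingredient is that $\|Ay-r_\theta\|_2^2\ge 0$. Since $\|Ay\|_2^2+(\beta^2-\mu^2)\|y\|_2^2=y^TMy=\mu^2$ and $r_\theta^TAy=\mu^2$,
\[
0\le\|Ay-r_\theta\|_2^2=\|Ay\|_2^2-2\mu^2+\beta^2=(\beta^2-\mu^2)\bigl(1-\|y\|_2^2\bigr).
\]
Hence $\mu<\beta$ forces $\|y\|_2\le 1$, and your root-selection argument closes.

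This is the structure behind the paper's proof, which argues differently. It observes that $v=[y^T,-1]^T$ is a generalized eigenvector of $([A,r_\theta]^T[A,r_\theta],J_{n,1})$ with eigenvalue $\mu^2-\beta^2<0$. Then $v^TJ_{n,1}v=\|y\|_2^2-1<0$, so $v$ rescales to a feasible point whose hyperbolic CS decomposition has $P=y/\|y\|_2$. The attainment clause of \cref{lemma:striko_trace} then gives optimality. Once the identity above is added, your scalar route is a valid and more elementary alternative that avoids the indefinite machinery.

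In the edge case $\mu=\beta$, the identity gives no information. This case requires $r_\theta\in\mathcal{R}(A)$, since otherwise $AA^T-r_\theta r_\theta^T$ has a negative eigenvalue. There, assuming $M=A^TA$ is invertible as the statement implicitly does, $y=A^\dagger r_\theta$. The condition $AA^T\succeq r_\theta r_\theta^T$ then gives $\|y\|_2\le 1$ directly, and the same conclusion follows.
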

\begin{proof}
    The expression \cref{eqn:main_single_rhs} is simply the result of combining \cref{thm:main_decomposition} with \cref{thm:rank_two_eigenvalue_problem}. As for the optimal value of $p$, it can be checked that
    \begin{equation*}
        v = \begin{bmatrix}
            (A\ts A + (\|r_\theta\|_2^2 - \mu^2)I)^{-1}A\ts r_\theta \\
            -1
        \end{bmatrix}
    \end{equation*}
    is a generalized eigenvector of the pencil $([A,r_\theta]\ts[A,r_\theta], J_{n,1})$ with eigenvalue $\lambda_1^{-} = \mu^2 - \|r_\theta\|_2^2$. By \cref{lemma:striko_trace} and its application to \cref{thm:main_decomposition}, the maximum of $\mu(Ap,r_\theta)$ is attained precisely by taking the CS decomposition of $v$ (i.e., normalizing the vector in the first entry to get $p$). 
\end{proof}

\section{Application to error bounds} \label{sec:application_error_bounds}
This section will focus on problems with a single right-hand side; the more general case will be left to future work.


\subsection{Lower bound} \label{sec:lower_bound}

Given some estimate $\mu_{\text{est}}$ for the backward error, one may use a sketch $SA$ to compute
\begin{equation}\label{def:p_sketched}
    \widetilde{p} \equiv \left((SA)\ts (SA) + (\|r_\theta\|_2^2 - \mu_{\text{est}}^2)I\right)^{-1}A\ts r_\theta,
\end{equation}
then normalize $\widetilde{p}$ and apply \cref{thm:main_single_rhs} to obtain a true lower bound. As compared with computing $\widetilde{\nu}$ as in \cref{def:sketched_kw_estimate} it requires one additional matvec with $A$ to compute $Ap$. The choice $\mu_{\est}=0$ will generally be sufficient, as the quality of the sketch is far more likely to be factor limiting the accuracy of any subsequent backward error estimate.

The following theorem gives bounds on the error of this estimator; the proof is given in \Cref{apx:lower_bound_accuracy} of the appendix.
\begin{theorem}\label{thm:lower_bound_accuracy}
    Let $\widetilde{p}$ be as in \cref{def:p_sketched}, with $\mu_{\text{est}} =0$ and where the sketch $S$ satisfies $(1-\eta)\|Ay\|_2\leq \|S(Ay)\|_2\leq (1+\eta)\|Ay\|_2$ for all $y\in \mathbb{R}^n$. Then
    \begin{equation*}
        \frac{1-\eta^2}{1+\eta^2}\nu(A,r_\theta) \leq \mu(A\widetilde{p}/\|\widetilde{p}\|_2, r_\theta) \leq \mu(A, r_\theta).
    \end{equation*}
\end{theorem}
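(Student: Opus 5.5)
The upper bound $\mu(A\widetilde{p}/\|\widetilde{p}\|_2, r_\theta)\leq \mu(A,r_\theta)$ follows at once from \cref{thm:main_single_rhs}. For $\mu(A,r_\theta)>0$, the right-hand side of \cref{eqn:main_single_rhs} is a maximum over unit vectors $p$, and combined with \cref{thm:rank_two_eigenvalue_problem} any particular unit $p$ gives a lower bound. The degenerate case $\mu(A,r_\theta)=0$ forces $A\ts r_\theta=0$, hence $\widetilde{p}=0$ and $\nu=0$; I will treat it separately with the convention that both sides vanish. The real content is the left inequality. I will reduce it to a statement about the Karlson--Wald\'en objective and then apply a Kantorovich-type inequality.

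\emph{Step 1: compare $\mu(Ap,r_\theta)$ with the KW objective.} For a unit vector $p$, \cref{thm:rank_two_eigenvalue_problem} gives $\mu(Ap,r_\theta)=2|p\ts A\ts r_\theta|/(\|Ap+r_\theta\|_2+\|Ap-r_\theta\|_2)$. By Cauchy--Schwarz and the parallelogram law,
\[
\|Ap+r_\theta\|_2+\|Ap-r_\theta\|_2\leq \sqrt{2(\|Ap+r_\theta\|_2^2+\|Ap-r_\theta\|_2^2)} = 2\sqrt{\|Ap\|_2^2+\|r_\theta\|_2^2}.
\]
Hence $\mu(Ap,r_\theta)\geq |p\ts A\ts r_\theta|/(\|Ap\|_2^2+\|r_\theta\|_2^2)^{1/2}$, which is exactly the objective maximized by $\nu(A,r_\theta)$. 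It therefore suffices to show that this KW objective, evaluated at $p=\widetilde{p}/\|\widetilde{p}\|_2$, is at least $\frac{1-\eta^2}{1+\eta^2}\nu(A,r_\theta)$.

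\emph{Step 2: rewrite in whitened coordinates.} Let $\rho=\|r_\theta\|_2^2$, $g=A\ts r_\theta$, $M=A\ts A+\rho I$ and $\widetilde{M}=(SA)\ts(SA)+\rho I$, so that $\widetilde{p}=\widetilde{M}^{-1}g$. Since the objective is homogeneous of degree zero in $p$ and $\|Ap\|_2^2+\rho\|p\|_2^2=p\ts Mp$, the squared objective at $\widetilde{p}$ is $(g\ts\widetilde{M}^{-1}g)^2/(g\ts\widetilde{M}^{-1}M\widetilde{M}^{-1}g)$. Set $h=M^{-1/2}g$ and $K=M^{1/2}\widetilde{M}^{-1}M^{1/2}$. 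The squared objective then becomes $(h\ts Kh)^2/(h\ts K^2h)$, while $\nu^2(A,r_\theta)=h\ts h$.

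\emph{Step 3: spectral bounds on $K$, then Kantorovich.} The sketch hypothesis gives $(1-\eta)^2A\ts A\preceq (SA)\ts(SA)\preceq(1+\eta)^2A\ts A$. Since $(1-\eta)^2\leq 1\leq(1+\eta)^2$, adding $\rho I$ yields $(1-\eta)^2M\preceq\widetilde{M}\preceq(1+\eta)^2M$. Consequently the eigenvalues of $K$ lie in $[a,b]$ with $a=(1+\eta)^{-2}$ and $b=(1-\eta)^{-2}$. The Kantorovich inequality, in the form $(h\ts Kh)^2\geq \frac{4ab}{(a+b)^2}(h\ts h)(h\ts K^2h)$ for symmetric $K$ with spectrum in $[a,b]$, then gives
\[
\frac{(h\ts Kh)^2}{h\ts K^2h}\geq \frac{4ab}{(a+b)^2}\,\nu^2.
\]
This form follows from $(K-aI)(bI-K)\succeq0$, which gives $h\ts K^2h\leq(a+b)h\ts Kh-ab\,h\ts h$, together with AM--GM.

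A short computation gives $ab=(1-\eta^2)^{-2}$ and $a+b=2(1+\eta^2)/(1-\eta^2)^2$, so $4ab/(a+b)^2=\bigl((1-\eta^2)/(1+\eta^2)\bigr)^2$. Taking square roots and chaining with Step 1 gives the claimed bound. I expect Step 3 to be the main point of care. One must verify that the sketch inequality transfers to $M$ versus $\widetilde{M}$ despite the shift $\rho I$, and one must use the correct form of Kantorovich, which involves $K^2$ rather than $K^{-1}$. The remaining steps are routine.
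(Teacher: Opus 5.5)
Your proof is correct and follows essentially the paper's route. You bound $\mu$ below by the Karlson--Wald\'{e}n quotient for the rank-one pair, whiten, apply Kantorovich, and control the spectrum via the sketch inequality. Your $K=M^{1/2}\widetilde{M}^{-1}M^{1/2}$ is the $n\times n$ compression of the paper's possibly singular $H=\hat{A}\widetilde{M}^{-1}\hat{A}\ts$, which spares you the paper's footnote and its cited condition-number fact. Your Step 1 proves directly, via the parallelogram law, the inequality $\mu\geq\nu$ that the paper cites.

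The only unstated point is that squaring the sketch bound needs $\eta<1$. For $\eta\geq 1$ the claimed lower bound is nonpositive and therefore trivial, so nothing is lost.
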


\subsubsection{Iterative refinement}
The lower bound may be improved via iterative refinement. Given $\widetilde{p}$ and $\mu_{\text{est}}$ as in \cref{def:p_sketched}, one can compute
\begin{equation}\label{eqn:iterative_refinement}
    \widetilde{p}_{\text{next}} = \widetilde{p} + \left((SA)\ts (SA) + (\|r_\theta\|_2^2 - \mu_{\text{est}}^2)I\right)^{-1}(A\ts (r_\theta -A\widetilde{p}) - (\|r_\theta\|_2^2 - \mu_{\text{est}}^2)\widetilde{p}).
\end{equation}
This will improve the estimate if the sketch $SA$ is of sufficient quality. An iterative method could potentially use $\widetilde{p}$ as a starting guess to compute $\widetilde{p}_{\text{next}}$ on the subsequent iteration, but the added cost per iteration is non-negligible.

\subsubsection{Estimate recycling}\label{sec:recycled_bound}
Once $Ap$ is computed, its value may be reused for other choices of $r_\theta$, including for subsequent iterations of an iterative method. For fixed $Ap$, \cref{eqn:main_single_rhs} can be computed with just access to $r_\theta$, and it is not necessary to find $A\ts r_\theta$. The vector $p$ may be recomputed whenever the estimated error drops below a user-defined threshold. 

\subsection{Upper bound}
With $\widetilde{p}$ as in \cref{def:p_sketched}, one can interpret $[\widetilde{p}\ts, -1]\ts$ as an approximate generalized eigenvector (see \cref{thm:main_single_rhs}). From \cref{eqn:U_relation_AV} it follows that $\widetilde{u}\equiv A\widetilde{p}-r_\theta$ is an approximate eigenvector for $AA\ts - r_\theta r_\theta\ts$, and so in principle we can use \cref{def:mu_min_proj} to bound the backward error as
\begin{equation*}
    \mu(A,r_\theta) \leq \left(\|\widetilde{u}\widetilde{u}^\dagger A\|_2^2 + \|(I - \widetilde{u}\widetilde{u}^\dagger)r_\theta\|_2^2\right)^{\frac{1}{2}}.
\end{equation*}
More generously, we could compute an orthonormal basis $U$ for $[A\widetilde{p}, r_\theta]$ and directly solve the right-hand side of
\begin{equation}\label{eqn:upper_bound_generous}
    \mu(A,r_\theta) \leq \mu(U\ts A, U\ts r_\theta).
\end{equation}
These upper bounds require additional matvecs with $A$ beyond what is needed to compute the lower bound.


\section{Numerical Experiments} \label{sec:numerical_experiments}
This section presents the results of some numerical experiments. Experiments were done in Octave 6.2.0 on a 2020 MacBook Pro with Apple M1 chip. 

In each case, we solved the problem $\min_x \|Ax-b\|_2$ using LSMR \cite{fong2011lsmr}. The matrix $A$ was always the matrix \texttt{GL7d12} from the SuiteSparse Matrix Collection \cite{davis2011university, kolodziej2019suitesparse}, which is $8899\times 1019$ with $37519$ nonzero entries. The condition number of $A$ is approximately $3.5\times 10^{16}$, but its singular values lie in two clusters $(14.34,1.71)\cup (1.66\mathrm{e}-14,4.07\mathrm{e}-16)$. The right-hand side was chosen randomly, according to the formula
\begin{equation*}
    b \equiv Ax + 10^{-4}\|A\|_2 w,
\end{equation*}
where $x\sim \mathcal{N}(0,n^{-1}I_n)$ and $w\sim \mathcal{N}(0,m^{-1}I_m)$ had independent Gaussian entries. The LSMR algorithm was run to a tolerance of $\text{ATOL}=10^{-12}$, stopping when $\|A\ts r\|_2\leq \text{ATOL}\|A\|_F\|r\|_2$. We tested three different sizes for the Gaussian sketching matrix $S\in \mathbb{R}^{n_{\text{sketch}}\times m}$, using $n_{\text{sketch}}\in \{\lfloor1.5n\rfloor, 6n, 16n\}$\footnote{In the case $n_{\text{sketch}}=16n$, the ``sketched'' matrix $SA$ is in fact larger than the original matrix $A$. Nonetheless, the sketch does not exactly recover the singular values or right singular vectors of $A$.}.

Results are shown in \cref{fig:gl7_results}. In all cases, the following error estimates are presented:
\begin{itemize}
    \item A solid black line represents the true backward error.
    \item Dotted and dashed black lines respectively represent the basic upper bounds $\|r_\theta\|_2$ and $\|A\ts r_\theta\|_2/\|r_\theta\|_2$.
    \item Solid red lines represent the lower bound proposed in \cref{sec:lower_bound} and the upper bound \cref{eqn:upper_bound_generous}.
    \item A dashed red line represents the sketched \KW estimate \cref{def:sketched_kw_estimate}.
    \item A dot-dashed red line represents the lower bound after one step of iterative refinement, as in \cref{eqn:iterative_refinement}. The starting guess $\widetilde{p}$ was computed from scratch according to \cref{def:p_sketched} on each iteration. 
    \item A dotted red line represents the lower bound that comes from recycling $Ap$ as described in \cref{sec:recycled_bound}. The vector $p$ is recomputed whenever the bound falls below $10^{-12}$. 
\end{itemize}
The plots on the left show the backward error relative to $\|A\|_2$, and the plots on the right show the ratio of the backward error to the sketched \KW estimate (dashed), the lower bound (solid), and the lower bound after one step of iterative refinement (dot-dashed).

First, the good news:
\begin{itemize}
    \item The accuracy of the lower bound was always comparable to that of the sketched \KW estimate, and was more accurate in the case of the cheapest sketch.
    \item The recycled lower bound did not degrade too quickly compared to the true backward error. It may therefore be useful as a cheap error monitor in cases where $r_\theta$ is computed as part of the iterative method, with more accurate estimates being computed only when the error is close to the stopping tolerance.
\end{itemize}

Now, the less-good news:
\begin{itemize}
    \item Although iterative refinement of $\widetilde{p}$ did improve the quality of the lower bound, it only did so in the final case $n_{\text{sketch}}=16n$, which may be unnecessarily large (the experiments in \cite{epperly2026fast} use sparse sign embeddings with $n_{\text{sketch}}=12n$). Depending on the size and nature of the problem, the increased accuracy of the error estimate may not justify the added up-front computation.
    \item Attempts to perform iterative refinement using the $\widetilde{p}$ from the previous iteration as a starting guess were not particularly successful (experiments not shown).
    \item The upper bound was only marginally better than the readily available bound $\|A\ts r\|_2/\|r\|_2$, and still trailed the true backward error by several orders of magnitude.
\end{itemize}

\begin{figure}
    \centering
    \includegraphics[width=0.45\linewidth]{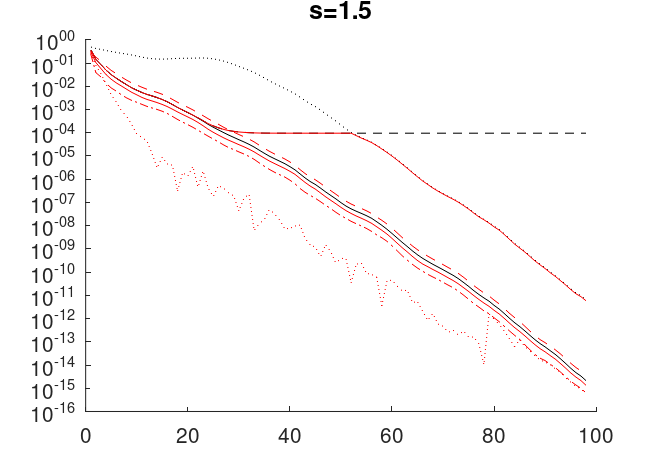}
    \includegraphics[width=0.45\linewidth]{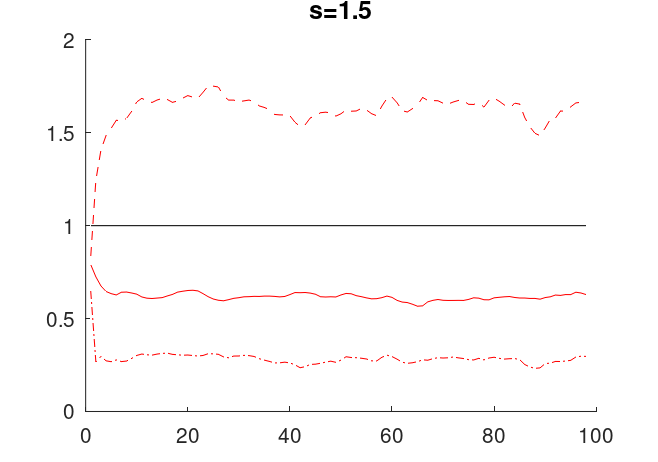}
        \includegraphics[width=0.45\linewidth]{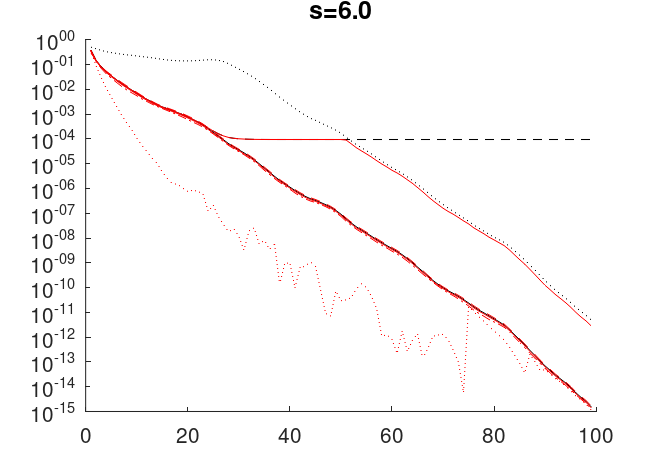}
    \includegraphics[width=0.45\linewidth]{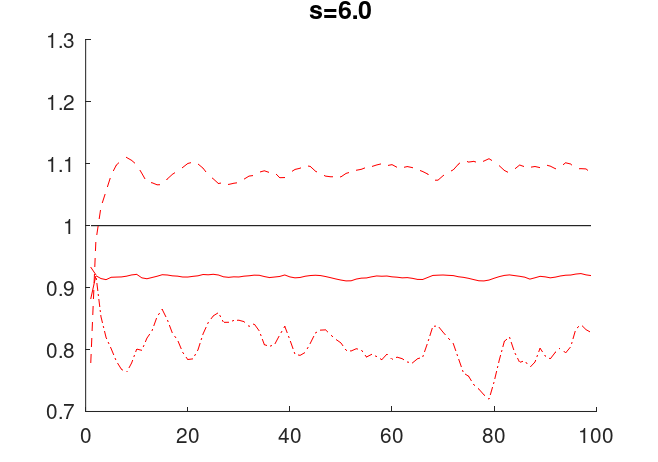}
        \includegraphics[width=0.45\linewidth]{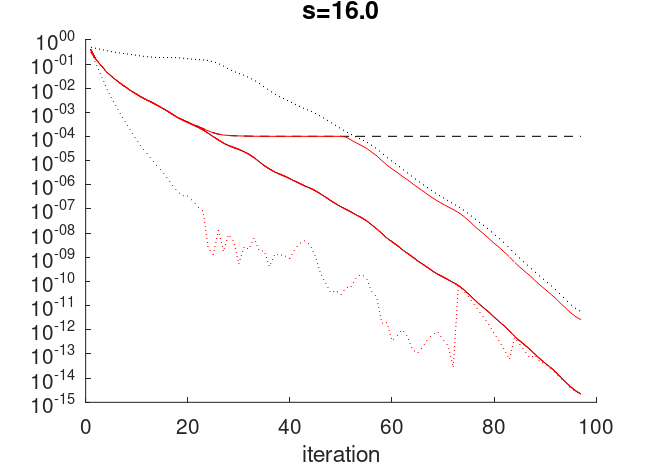}
    \includegraphics[width=0.45\linewidth]{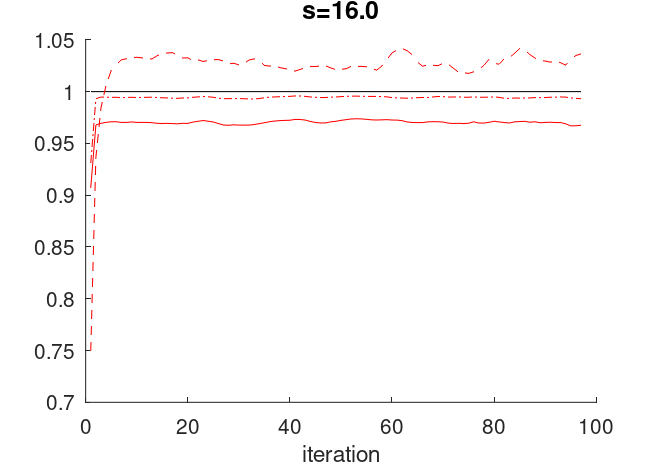}
    \caption{Relative backward error estimates (left) and ratios compared to the true backward error (right) with three different sketching parameters. The newly proposed lower bound (bold red line) is of comparable quality to the sketched \KW estimate (dashed red line).}
    \label{fig:gl7_results}
\end{figure}

\section{Conclusion} \label{sec:conclusion}

The primary contribution of this work is a theoretical one: a decomposition of the backward error into a sum of smaller terms, which uses the fact that the backward error can be expressed naturally in terms of a generalized eigenvalue problem. This decomposition allows for the simple computation of lower bounds on the backward error, and in particular can be used to convert a sketch-based estimate of the backward error into a lower bound of comparable quality. Efforts to produce upper bounds in a similar manner were generally unsuccessful. 

For a potential application to iterative methods, one can compute a single test vector $Ap$, which may be reused over multiple iterations to cheaply find lower bounds on the backward error. If the residual does not change too drastically from one iteration to the next then the bounds may be of sufficient quality to use in place of more expensive estimates.

One potential avenue for future work is to develop procedures for finding lower bounds when the problem has multiple right-hand sides. Whereas the single right-hand side case involves estimating a single vector $p$, the general case involves two matrices $P$ and $Q$, constrained to have orthonormal columns. It would, of course, be valuable to find a method of producing accurate upper bounds; failing that, future work could explain why finding a good upper bound appears to be more difficult than finding a good lower bound.

\section*{Statement on use of LLMs}
The manuscript and the code used in numerical experiments were both written entirely by the author. The idea to use the Kantorovich inequality in the proof of \cref{thm:lower_bound_accuracy} originated from a language model; the author is responsible for finding and checking the cited article \cite{henrici1961two}.

A language model also suggested that the decomposition underlying \cref{thm:main_decomposition} is known elsewhere in the literature, and repeatedly mentioned the terms ``Krein space'' and ``Pontryagin theory'', which concern linear algebra over an indefinite inner product. The author finds the language model's claim quite plausible, but has not yet been able to track down a specific source.

\appendix
\section{Proof of \cref{thm:lower_bound_accuracy}}\label{apx:lower_bound_accuracy}
The goal is to prove that
\begin{equation*}
    \frac{1-\eta^2}{1+\eta^2}\nu(A,r_\theta) \leq \mu(A\widetilde{p}/\|\widetilde{p}\|_2, r_\theta) \leq \mu(A, r_\theta).
\end{equation*}
The right-hand inequality follows directly from \cref{thm:main_decomposition}. As for the left-hand inequality, 

\begin{align*}
    \mu^2(A\widetilde{p}/\|\widetilde{p}\|_2, r_\theta) &\geq \nu^2(A\widetilde{p}/\|\widetilde{p}\|_2, r_\theta) \\
    &= \frac{(\widetilde{p}\ts A\ts r_\theta)^2/\|\widetilde{p}\|_2^2}{\|A\widetilde{p}\|_2^2/\|\widetilde{p}\|_2^2 + \|r_\theta\|_2^2} \\
    &= \frac{\widetilde{p}\ts A\ts r_\theta r_\theta\ts A\widetilde{p}}{\widetilde{p}\ts (A\ts A + \|r_\theta\|_2^2I)\widetilde{p}}.
\end{align*}
This Rayleigh quotient is maximized at $p\equiv (A\ts A + \|r_\theta\|_2^2I)^{-1}A\ts r_\theta$. To compare the value in the above expression to the maximum attainable value, make the substitutions
\begin{align*}
    A\ts r_\theta &= (A\ts A + \|r_\theta\|_2^2I)p, \\
    \widetilde{p} &= ((SA)\ts (SA) + \|r_\theta\|_2^2I)^{-1}(A\ts A + \|r_\theta\|_2^2I)p.
\end{align*}
To simplify the expressions, define $\hat{A}\equiv [A\ts, \|r_\theta\|_2I]\ts$ and take the Cholesky factorization $((SA)\ts (SA) + \|r_\theta\|_2I)^{-1} = \hat{R}\ts \hat{R}$. Proceeding, we find that
\begin{align*}
    \frac{\widetilde{p}\ts A\ts r_\theta r_\theta\ts A\widetilde{p}}{\widetilde{p}\ts (A\ts A + \|r_\theta\|_2^2I)\widetilde{p}}
    &= \frac{\left(p\ts(\hat{A}\ts\hat{A})(\hat{R}\ts \hat{R})^{-1}(\hat{A}\ts \hat{A})p\right)^2}{p\ts(\hat{A}\ts\hat{A})(\hat{R}\ts \hat{R})^{-1} (\hat{A}\ts \hat{A})(\hat{R}\ts\hat{R})^{-1}(\hat{A}\ts \hat{A})p} \\
    &= \frac{\left(p\ts\hat{A}\ts(\hat{A}\hat{R}^{-1}) (\hat{R}^{-T}\hat{A}\ts) \hat{A}p\right)^2}{p\ts\hat{A}\ts(\hat{A}\hat{R}^{-1}) (\hat{R}^{-T}\hat{A}\ts)(\hat{A}\hat{R}^{-1}) (\hat{R}^{-T}\hat{A}\ts) \hat{A}p} \\
    &= \frac{\left(p\ts\hat{A}\ts H \hat{A}p\right)^2}{p\ts\hat{A}\ts H^2 \hat{A}p},
\end{align*}
where $H \equiv (\hat{A}\hat{R}^{-1}) (\hat{R}^{-T}\hat{A}\ts)$. 

To bound this expression, we use the Kantorovich inequality \cite{henrici1961two}, which (when put in terms of matrix algebra) states that for a symmetric positive definite matrix $M$ and vector $x$,
\begin{equation*}
    (x\ts M x)(x\ts M^{-1}x) \leq \frac{(\lambda_{\max}(M)+\lambda_{\min}(M))^2}{4\lambda_{\max}(M)\lambda_{\min}(M)}(x\ts x)^2 = \frac{(1+\kappa(M))^2}{4\kappa(M)}(x\ts x)^2,
\end{equation*}
where $\kappa(M) = \lambda_{\max}(M)/\lambda_{\min}(M)$. Applying this inequality with $x = H^{1/2}\hat{A}p$ and $M = H$, then rearranging, yields
\begin{align*}
    \frac{\left(p\ts\hat{A}\ts H \hat{A}p\right)^2}{p\ts\hat{A}\ts H^2 \hat{A}p}
    &\geq \frac{4\kappa(H)}{(1+\kappa(H))^2}\|\hat{A}p\|_2^2 \\
    &= \frac{4\kappa(H)}{(1+\kappa(H))^2}r_\theta\ts A (A\ts A+\|r_\theta\|_2^2I)^{-1}A\ts r_\theta \\
    &= \frac{4\kappa(H)}{(1+\kappa(H))^2} \nu^2(A, r_\theta),
\end{align*}
where $\kappa(H) = \sigma_{\max}^2(\hat{A}\hat{R}^{-1})/\sigma_{\min}^2(\hat{A}\hat{R}^{-1})$.\footnote{Note that although $H$ is not necessarily positive definite, $\hat{A}p$ is in the column space of $H$. Thus only the nonzero singular values of $H$ are considered.}

Finally, from the sketching bounds $(1-\eta)\|Ay\|_2 \leq \|S(Ay)\|_2 \leq (1+\eta)\|Ay\|_2$ we can find that
\begin{equation*}
    (1-\eta)\|\hat{A}y\|_2 \leq \|\hat{R}y\|_2 \leq (1+\eta)\|\hat{A}y\|_2
\end{equation*}
holds as well; i.e., the regularizing term $\|r_\theta\|_2^2I$ can only reduce the distortion of the sketch. From \cite[Fact 2.2]{epperly2026fast} it follows that the condition number of $\hat{A}\hat{R}^{-1}$ is bounded above by $(1+\eta)/(1-\eta)$. Thus, 
\begin{align*}
    \mu(A\widetilde{p}/\|\widetilde{p}\|_2, r_\theta) &\geq
    \frac{2\sqrt{\kappa(H)}}{1+\kappa(H)} \nu(A, r_\theta)\\
    &\geq \frac{2\frac{1+\eta}{1-\eta}}{1 + \frac{(1+\eta)^2}{(1-\eta)^2}}\nu(A, r_\theta) \\
    &= \frac{2(1+\eta)(1-\eta)}{(1-\eta)^2 + (1+\eta)^2}\nu(A, r_\theta)\\
    &= \frac{1 - \eta^2}{1+\eta^2}\nu(A, r_\theta).
\end{align*}

\bibliography{references}
\bibliographystyle{abbrv}

\end{document}